\theoremstyle{plain}
\newtheorem{thm}{Theorem}[section]
\newtheorem*{uthm}{Theorem}
\newtheorem{lem}[thm]{Lemma}
\newtheorem{prop}[thm]{Proposition}
\newtheorem{cor}[thm]{Corollary}
\newtheorem{rmk}[thm]{Remark}
\newtheorem{probl}[thm]{Problem}
\newtheorem{ques}[thm]{Question} 
\theoremstyle{definition}
\DeclareMathOperator{\LO}{LO}
\begin{document}

\title{Foliations from left orders} 
\author{Hyungryul Baik}
\address{%
		Department of Mathematical Sciences, KAIST\\
		291 Daehak-ro Yuseong-gu, Daejeon, 34141, South Korea 
}
\email{%
        hrbaik@kaist.ac.kr
}

\author{Sebastian Hensel}

\address{%
Mathematics Institute, University of Munich
Theresienstr. 39
D-80333 Munich
Germany		
}
\email{%
        hensel@math.lmu.de
        }

\author{Chenxi Wu}

\address{%
		Department of Mathematics
480 Lincoln Drive
213 Van Vleck Hall
Madison, WI 53706
}
\email{%
        cwu367@math.wisc.edu
}

\begin{abstract} We describe a construction which takes as an input a
  left order of the fundamental group of a manifold, and outputs a
  (singular) foliation of this manifold which is analogous to a taut foliation. 
  We investigate this construction in detail in dimension $2$, and exhibit connections to
  various problems in dimension $3$.
\end{abstract}

\maketitle

\section*{Introduction} 

A group is called left-orderable (or simply $\LO$) if it admits a
left-invariant linear order (a linear order $<$ on $G$ so that $a<b$
implies $ca< cb$ for all $a,b,c \in G$). We call such an order simply 
a \emph{left order}. This seemingly purely
algebraic notion is actually related to quite different aspects of the
group. We can only share a tiny part of the vast literature on
orderable groups here; compare e.g. the book of Mura and Rhemtulla
\cite{mura1978orderable} for a discussion of early results, or
\cite{clay2016ordered} and references therein for deep connections
between topology and orderability of groups has been discovered

For us, a crucial insight will be that orderablilty has a dynamical
interpretation: a countable group $G$ is $\LO$ if and
only if it admits a faithful action on the real line by
orientation-preserving homeomorphisms. To learn more about the
dynamical point of view of orderable groups, one can consult
\cite{navas2010dynamics} for instance.

For a closed orientable irreducible 3-manifold $M$, Zhao
\cite{zhao2021left} very recently presented a construction of
foliation $\mathcal{F}$ in $M \setminus int(B^3)$ where $B^3$ is a
3-dimensional ball embedded in $M$ with certain transverse
structure. $\mathcal{F}$ is analogous to a taut foliation in the sense
that there is a finite collection of arcs and simple closed curves
which transversely meet every left of $\mathcal{F}$. 

\smallskip 
In this paper, we
give an alternative construction of such foliations under the
assumption that $M$ has a strongly essential 1-vertex
triangulation. 
An one-vertex triangulation of a manifold is called
\emph{essential} if no edge-loop is null-homotopic, and \emph{strongly
  essential} if in addition no two edges are homotopic (fixing the
vertex). In low dimensions, the such triangulations are abundant; in
fact it has been conjectured that every closed orientable irreducible
3-manifold admits such a triangulation. A result of
Hodgson-Rubinstein-Segerman-Tillmann \cite{hodgson2015triangulations}
established this conjecture for a large class of 3-manifolds
(namely, either a closed Haken 3-manifold, a non-compact hyperbolic 3-manifold, or a closed 3-manifold with a Riemannian metric of constant negative or zero curvature).
 
Our main result is the following construction (which works in arbitrary dimension) and produces
singular foliations from orders.
\begin{uthm} Suppose $M$ admits an essential triangulation in $M$ with a unique vertex $v$ and $\pi_1(M)$ is $\LO$. Then to each left order $\mathcal{O}$ on $\pi_1(M)$ we associate a singular
	foliation $\mathcal{F}_\mathcal{O}$ which  has a unique singular point at $v$ and it is normal to each top-dimensional simplex of $\mathcal{T}$. 
\end{uthm}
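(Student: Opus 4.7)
My plan is to construct the foliation upstairs in the universal cover $\widetilde M$ from the left order $\mathcal{O}$ and then descend equivariantly to $M$. Because $\mathcal{T}$ has a unique vertex, the deck group $G=\pi_1(M)$ acts freely and transitively on the vertex set $V(\widetilde{\mathcal{T}})$ of the lifted triangulation; fixing a basepoint lift $\tilde v\in V(\widetilde{\mathcal{T}})$ identifies $V(\widetilde{\mathcal{T}})$ with $G$.

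Using the dynamical realisation of $\mathcal{O}$, I produce a faithful action $\rho\colon G\to\Homeop(\RR)$ for which the orbit map $h(g)=\rho(g)(0)$ is strictly order-preserving and satisfies the equivariance $h(gk)=\rho(g)(h(k))$. Viewing $h$ as a function on $V(\widetilde{\mathcal{T}})$, I extend it affinely to each top-dimensional simplex using barycentric coordinates. Since the affine extension on a simplex is determined by its values at the vertices, any two simplices sharing a face produce affine extensions that agree on that face, so these local pieces glue to a continuous PL function $\tilde h\colon\widetilde M\to\RR$.

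The candidate foliation $\widetilde{\mathcal{F}}_\mathcal{O}$ is then the level-set foliation of $\tilde h$. On each open top-dimensional simplex $\sigma$, the restriction $\tilde h|_\sigma$ is affine and takes pairwise distinct values at the vertices (since $h$ is order-preserving, hence injective), so its gradient never vanishes; thus the restriction is a codimension-one foliation by parallel affine hyperplanes inside $\sigma$, which I take to be the meaning of \emph{normal to each top-dimensional simplex}. The equivariance $\tilde h(g\cdot x)=\rho(g)(\tilde h(x))$, combined with $\rho(g)$ being orientation-preserving, shows that $g$ sends the leaf $\{\tilde h=c\}$ to $\{\tilde h=\rho(g)(c)\}$, so $\widetilde{\mathcal{F}}_\mathcal{O}$ is $G$-invariant and descends to a singular foliation $\mathcal{F}_\mathcal{O}$ on $M$. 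The singular behaviour at $\tilde v$ is intrinsic: infinitely many top-simplices meet there, each contributing a different affine piece of the leaf through $\tilde v$, so the leaf is not locally a manifold at that point; all such singularities form a single $G$-orbit and collapse to the unique singular point $v$ of $M$.

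The main obstacle is pinning down the singular set precisely. The argument above shows that the interiors of top-dimensional simplices are nonsingular and that the vertex orbit is contained in the singular locus, but in dimension at least three several top-simplices can meet along a lower-dimensional face, and the leaves passing through a generic interior point of such a face arise as a ``book'' of affine pieces, one for each adjacent top-simplex. I must check, using either the ambient definition of ``singular point'' in force or the essentialness of $\mathcal{T}$, that this branching does not introduce singular points beyond the orbit of $v$. This combinatorial verification in the higher-dimensional case is where I expect the argument to require the most care; in dimension two it is automatic, because every non-vertex face of positive codimension is an edge shared by exactly two triangles, and the two affine level sets fit together there into a topological arc.
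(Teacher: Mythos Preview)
Your overall plan---build a height function on $\widetilde M$ from the dynamical realisation of $\mathcal{O}$, take level sets, descend---matches the paper.  The gap is in the extension step and the equivariance claim that follows from it.

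You extend $h$ \emph{affinely in barycentric coordinates} over each top simplex and then assert $\tilde h(g\cdot x)=\rho(g)(\tilde h(x))$.  This identity is false in general.  On a simplex $\sigma$ with vertices $g_0\tilde v,\ldots,g_n\tilde v$, your $\tilde h$ is the affine function with vertex values $a_i=h(g_i)$, so $\tilde h(\sum t_i g_i\tilde v)=\sum t_i a_i$.  The deck transformation $g$ carries $\sigma$ affinely to $g\sigma$, on which $\tilde h$ is again affine, now with vertex values $b_i=h(gg_i)=\rho(g)(a_i)$.  Hence $\tilde h(g\cdot\sum t_i g_i\tilde v)=\sum t_i b_i$, whereas $\rho(g)(\tilde h(\sum t_i g_i\tilde v))=\rho(g)\bigl(\sum t_i a_i\bigr)$.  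These agree for all barycentric $t$ only if $\rho(g)$ is affine on the finite set $\{a_0,\ldots,a_n\}$, which a generic orientation-preserving homeomorphism is not (already for $n=2$).  Worse, the level sets of $\tilde h$ and of $\tilde h\circ g$ on $\sigma$ are the parallel hyperplane families with normals $(a_i)$ and $(b_i)$ respectively; these coincide only when $(b_i)$ lies in the span of $(a_i)$ and $(1,\ldots,1)$, i.e.\ again only when $\rho(g)$ is affine on $\{a_i\}$.  So the level-set foliation of your $\tilde h$ is not $G$-invariant and does not descend to $M$.

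The paper avoids this by \emph{not} extending affinely from the vertices.  It first defines $F$ on the $1$--skeleton equivariantly: on an edge with terminal vertex $gv_0$ one sets $F(x)=\rho(g)\bigl(F(g^{-1}x)\bigr)$, forcing $F(gx)=\rho(g)(F(x))$ on all edges.  Only then does it fill in higher cells, declaring each level set in a simplex to be the linear polytope spanned by the already-determined level points on its edges.  Because the deck group acts by PL maps and preserves level sets on the $1$--skeleton, it preserves these polytopal level sets as well; this is exactly the content of the paper's lemma.  If you replace your barycentric-affine extension with this two-step extension, the rest of your outline (including the discussion of the singularity at $v$) goes through.
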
 

See Section~\ref{sec:singular-foliation} for details of the construction. Our foliation is also analogous to a taut foliation in the same sense as the one constructed by Zhao is. In fact, since the foliation is normal to each top-dimensional simplex of the triangulation, the set of edges of the triangulation is a collection of simple closed curves which meet transversely every leaf of the foliation. 

The singular foliation is called a \textbf{fLOiation}, suggesting that it is a foliation coming from a left order. 

We investigate this construction thoroughly in dimension 2. In particular, we give a complete classification of fLOiations in the $2$--torus in Section~\ref{sec:surfaces}. In the case of hyperbolic surfaces, such a classification seems much harder. However, we can prove the following first steps
\begin{uthm}
	Let $\mathcal{O}$ be a left order on $\pi_1(S_g)$ for $g \geq 2$. Then, each leaf of $\mathcal{F}_\mathcal{O}$ lifts to a quasigeodesic in $\mathbb{H}^2$. In particular, $\mathcal{F}_\mathcal{O}$  can be pulled tight to a geodesic lamination $\lambda_\mathcal{O}$. This assignment is continuous with respect to the topology on orders introduced by Sikora in \cite{sikora2004topology}, and the Hausdorff topology on the space of geodesic laminations.
\end{uthm}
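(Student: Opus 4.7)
The plan is to pass to the universal cover $\HH^2$ and exploit the bounded geometry of the lifted triangulation to control leaves combinatorially. First I straighten the $1$-vertex triangulation $\mathcal{T}$ by lifting to $\HH^2$ and replacing each lift of an edge by the hyperbolic geodesic between its endpoints in the orbit $\pi_1(S_g)\cdot\widetilde v$; since $\mathcal{T}$ is strongly essential, this yields a locally finite, $\pi_1(S_g)$-equivariant, geodesic triangulation $\widetilde{\mathcal{T}}$ of $\HH^2$ of uniformly bounded combinatorics (by cocompactness). A leaf $\ell$ of $\mathcal{F}_\mathcal{O}$ lifts to a properly embedded $1$-manifold $\widetilde\ell\subset\HH^2$ transverse to every edge of $\widetilde{\mathcal{T}}$, since the foliation is normal in each top-dimensional simplex.

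To establish the quasigeodesic property I record the sequence of edges $\widetilde e_1,\widetilde e_2,\ldots$ crossed by $\widetilde\ell$ and form the piecewise-geodesic path $P$ joining their midpoints. The central combinatorial claim is that this sequence is \emph{non-backtracking} in the dual graph of $\widetilde{\mathcal{T}}$: a leaf cannot re-enter a triangle it has already visited. Granted this, the rest proceeds in standard fashion. Consecutive midpoints lie at uniformly bounded distance, so $P$ is a quasi-isometric embedding of $\RR$ into $\HH^2$, and $\widetilde\ell$ stays within a triangle-diameter of $P$, hence is a quasigeodesic with constants independent of the leaf and of $\mathcal{O}$. Quasigeodesics in $\HH^2$ have distinct endpoints on $\partial\HH^2$ and fellow-travel a unique geodesic; I then define $\lambda_\mathcal{O}$ as the closure of the $\pi_1$-equivariant union of these straightenings, projected to $S_g$. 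Disjoint leaves yield unlinked endpoint pairs, so the straightenings remain pairwise disjoint and $\lambda_\mathcal{O}$ is indeed a geodesic lamination.

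The hardest step is the non-backtracking claim. The approach I would pursue uses the fact that $\widetilde\ell$, being properly embedded and going to two ideal points, separates $\HH^2$, and the two sides of $\widetilde\ell$ partition the orbit $\pi_1(S_g)\cdot\widetilde v$ of lifts of $v$. The way the normal foliations glue across each edge is dictated by the $\mathcal{O}$-comparisons of its endpoints and of the opposite vertices of the adjacent triangles, so a revisit of a triangle by $\widetilde\ell$ would force the same edge to appear on both sides of $\widetilde\ell$ with inconsistent $\mathcal{O}$-comparisons. Making this precise is the main technical obstacle and requires the explicit recipe from Section~\ref{sec:singular-foliation}.

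For continuity, recall that Sikora's topology on $\LO(\pi_1(S_g))$ has a sub-basis $\{\mathcal{O}:g<_\mathcal{O} h\}$ indexed by pairs $g,h$, so $\mathcal{O}_n\to\mathcal{O}$ exactly when the restrictions of the $\mathcal{O}_n$ to any finite $F\subset\pi_1(S_g)$ eventually coincide with that of $\mathcal{O}$. The local recipe producing $\mathcal{F}_\mathcal{O}$ inside a triangle depends only on the $\mathcal{O}$-comparisons of its vertices, so for any compact $K\subset\HH^2$ only finitely many comparisons enter into $\mathcal{F}_\mathcal{O}|_K$. Therefore $\mathcal{F}_{\mathcal{O}_n}|_K = \mathcal{F}_\mathcal{O}|_K$ for $n$ large. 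Because the quasigeodesic constants depend only on $\widetilde{\mathcal{T}}$ and not on the order, $C^0$-agreement of the foliations on $K$ translates into Hausdorff-agreement of $\lambda_{\mathcal{O}_n}$ and $\lambda_\mathcal{O}$ on $K$, which is precisely convergence in the Hausdorff topology on geodesic laminations.
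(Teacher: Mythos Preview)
Your overall strategy---control leaves combinatorially in the lifted triangulation, straighten, then use finiteness of the data determining the foliation on compacta for continuity---is reasonable, but the argument has a genuine gap at its core step, and it diverges from the paper's mechanism in a way that matters.

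\medskip
\textbf{The gap.} You assert that once the edge-midpoint path $P$ is non-backtracking in the dual graph (equivalently, each triangle is visited at most once), ``consecutive midpoints lie at uniformly bounded distance, so $P$ is a quasi-isometric embedding of $\RR$ into $\HH^2$''. Bounded step length only gives the Lipschitz \emph{upper} bound; it says nothing about the lower bound $d(P(m),P(n))\geq \lambda^{-1}|m-n|-C$. The dual graph of a cocompact geodesic triangulation of $\HH^2$ is a $3$--regular planar graph quasi-isometric to $\HH^2$, \emph{not} a tree, and injective paths in such graphs are not automatically quasigeodesics: around every vertex of valence $k$ there is a $k$--cycle in the dual graph, and one can concatenate such detours. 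Your non-backtracking claim is in fact easy (each leaf is a level set of a function that is monotone on every edge segment, so meets each edge segment, and hence each triangle, at most once), but it is too weak to force well-defined endpoints.

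\medskip
\textbf{What the paper does instead.} The paper works not with individual edges but with \emph{walls}: for each edge $\gamma$, the full bi-infinite lift $\{cg^i v_0\}_{i\in\ZZ}$, which is a uniform quasigeodesic because cyclic subgroups are undistorted. The key observation is that the defining function $F$ is monotone (hence injective) along every wall, so a leaf---being a level set---meets each wall at most once. This is strictly stronger than your triangle condition: a leaf could in principle cross two distinct edge-segments of the same wall while visiting each triangle only once. Since each wall separates $\HH^2$, a leaf that crosses walls $w_1,w_2,\dots$ is eventually confined to nested quasi-halfspaces $C_1\supset C_2\supset\cdots$; using that infinitely many of the $w_i$ are lifts of the same edge, the paper shows $\bigcap_i\partial_\infty C_i$ is a single point (otherwise one gets a sequence of conjugate axes whose endpoints converge to two distinct points, impossible for a cocompact Fuchsian group). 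Distinctness of the two endpoints follows by finding two disjoint walls separating the two rays. This nested-halfspace argument is what replaces your missing lower bound.

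\medskip
\textbf{Continuity.} Your argument here is closer to the paper's in spirit, but the claim ``$\mathcal{F}_{\mathcal{O}_n}|_K=\mathcal{F}_{\mathcal{O}}|_K$ for $n$ large'' is not literally true: the parametrisations on edges depend on the embedding $i:G\to\RR$, not just on the finitely many comparisons. What does stabilise is the \emph{combinatorial} data---which walls through $K$ each leaf crosses---and the paper shows (Proposition~\ref{prop:endpoint-convergence}) that this determines endpoints up to $\epsilon$, by choosing finitely many walls whose halfspaces have boundary arcs of diameter $<\epsilon$ covering $S^1$. You should replace exact agreement of foliations by agreement of wall-crossing patterns.
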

Given this theorem, one can first try to characterise which geodesic laminations can be obtained as straightenings of
fLOiations. We give some positive and negative results in Section~\ref{sec:surfaces}: in particular, minimal orientable geodesic laminations are possible.

\smallskip
In Section~\ref{sec:dim3}, we study the case of dimension 3, and we give a necessary and sufficient condition for a fLOiation in a 3-manifold to be an honest regular foliation. The exact nature of fLOiations in dimension 3 is still very mysterious and we pose a lot of questions in this section. Some of the questions are related to so-called the L-space conjecture, proposed in the seminal paper of Boyer-Gordon-Watson \cite{boyer2013spaces}. It states that a rational homology 3-sphere $Y$ is not an L-space if and only if its fundamental group $G = \pi_1(Y)$ is $\LO$. Although the conjecture is still widely open, it suggests a deep connection between topology of a manifold and algebraic property of its fundamental group and has been studied by many authors, including \cite{boyer2005orderable}, \cite{dunfield2020floer}.  Later a connection to foliations is also made. Ozsv{\'a}th and Szab{\'o} \cite{ozsvath2004holomorphic} showed that a rational homology 3-sphere $Y$ has a taut foliation then it is not an L-space (see also \cite{kazez2017c0}, \cite{bowden2016approximating}), and the converse was conjectured by Juh\'asz in \cite{juhasz2015survey}. From this point of view, one can ask if in general the existence of a taut foliation in a manifold is equivalent for its fundamental group being $\LO$. We hope that further investigation on fLOiation in the future would shed some light in this direction too.

\subsection*{Acknowledgements} 
The construction of fLOiations was inspired by a conversation that the first author had with with Bill Thurston, Dylan Thurston, and Sergio Fenley in Spring 2012, and the name fLOiation was suggested by Bram Petri. We thank them greatly. 

We also thank Danny Calegari, Nathan Dunfield, Ursula Hamenst\"adt for helpful discussions. 
The first author was partially supported by Samsung Science \& Technology Foundation grant No. SSTF-BA1702-01.

\section{Two constructions of fLOiations}
\label{sec:singular-foliation}

In this section, we describe our constructions of singular foliations from orders. We begin with a general construction that works in any dimension. Then, we briefly describe an alternate, equivalent construction in dimension $2$, which is slightly easier to visualize. Finally, we discuss how the construction can be adapted to work with multi-vertex triangulations, and (certain) partial orders.

\subsection{General construction of fLOiations}
\label{subsec:fLOiationgeneral} 


Throughout, we let $M$ be a manifold, and $\mathcal{T}$ a fixed triangulation with a single vertex.
Under the assumption that $\pi_1(M)$ is left-orderable, there is a faithful action
\[ \rho:\pi_1(M) \to \mathrm{Homeo}(\mathbb{R}) \]
with the additional property that the orbit map of $0$ is injective (see Proposition 2.1 in \cite{navas2010dynamics} for instance). We call such $\rho$ \emph{allowed}.

We now describe a construction of a singular foliation from $\mathcal{T}$ and $\rho$ (and discuss dependence later).
Let $\tilde{M}$ be the universal cover of $M$, and lift the triangulation $\mathcal{T}$ from $M$ to a
triangulation $\tilde{\mathcal{T}}$ of $\tilde{M}$. We equip $M$ and $\tilde{M}$ with PL structures based on these triangulations. This makes the covering map PL, which in turn implies that the deck group $G= \pi_1(M)$ acts as PL transformations.
 
Denote the vertex of $\mathcal{T}$ by $v$, and let $v_0$ be one of its lifts. The preimage of $v$ under covering is $Gv_0$. We inductively define a map $F : \tilde{M}$ to $\mathbb{R}$ satisfying the following:
\begin{itemize}
	\item For each vertex $gv_0$, we have $F(gv_0) = \rho(g)(0)$.
	\item For edge $e$, pick an orientation such that the value of $F$ on the endpoint is greater than the starting point. Suppose the end point is $gv_0$. Now, if $g=e$, then we define $F$ to be linear on $e$. If $g\not=1$, then for any $x\in e$ we set $F(x)=\rho(g)(F(g^{-1}x))$.
	\item For $d$-dimensional cells, we extend $F$ from the edges to the cell in such a way that the level sets are  linear $d-1$ polytopes.
\end{itemize}

\begin{lem}
	The level sets of $F$ are invariant under the action of the deck group $\pi_1(M)$.
\end{lem}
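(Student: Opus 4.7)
The plan is to establish the stronger equivariance identity
\[
F \circ g = \rho(g) \circ F
\]
for every $g \in G := \pi_1(M)$; the lemma then follows immediately, since $\rho(g)$ is a self-homeomorphism of $\mathbb{R}$ and the level set $F^{-1}(c)$ is carried by $g$ to the level set $F^{-1}(\rho(g)(c))$, so the family of level sets is $G$-invariant. I would verify the identity by induction on the skeleta of $\tilde{\mathcal T}$, exactly mirroring the inductive construction of $F$.

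On the $0$-skeleton every vertex has the form $hv_0$, and the identity reduces to $\rho(gh)(0) = \rho(g)(\rho(h)(0))$, which is just the homomorphism property of $\rho$. On an edge $e$ oriented so that its ``larger'' endpoint is $hv_0$, the vertex case together with the fact that $\rho(g) \in \Homeop(\mathbb{R})$ is order-preserving shows that the $F$-orientation of the translate $ge$ is the $g$-image of that of $e$ and that its larger endpoint is $(gh)v_0$. The edge case then reduces to a direct substitution into the defining formula $F(x) = \rho(h)(F(h^{-1}x))$ for $x \in e$; splitting into the cases $h=1$, $gh=1$, and $h, gh \neq 1$, in each case a one-line calculation yields $F(gx) = \rho(g)(F(x))$. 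For instance, in the generic case one has $F(gx) = \rho(gh)(F(h^{-1}g^{-1}gx)) = \rho(g)\rho(h)(F(h^{-1}x)) = \rho(g)(F(x))$.

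For top-dimensional cells the claim should follow once one pins down precisely what ``extend so that the level sets are linear $(d-1)$-polytopes'' means. The extension is canonically determined by the boundary data together with the linear order of $F$-values at the vertices of the cell; since $\rho(g)$ preserves this linear order, the canonical extensions on $\sigma$ and $g\sigma$ are intertwined by $g$ and $\rho(g)$, and equivariance propagates from the boundary to the interior. The main obstacle I anticipate is precisely this last step --- making the higher-dimensional extension rigorous enough that it provably commutes with the $G$-action. Provided this is done cleanly (for example, by specifying a barycentric interpolation whose values depend only on the $F$-values at the vertices and the combinatorics of the simplex), the rest of the argument is just bookkeeping.
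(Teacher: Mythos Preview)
Your proposal is correct and follows the same skeleton as the paper's proof: establish equivariance on the $1$--skeleton, then propagate to higher cells. Your treatment of the $0$-- and $1$--skeleta is more explicit than the paper's (which simply asserts $\rho(g)F(x)=F(gx)$ on the $1$--skeleton ``by definition''), and is fine.

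The one place where the paper's argument differs is the higher-dimensional step, and its route is slicker precisely at the point you flag as the main obstacle. You aim to prove the full pointwise equivariance $F(gx)=\rho(g)(F(x))$ on each $d$--cell, and you correctly note that this requires pinning down the extension well enough to see it commutes with $G$. The paper sidesteps this entirely: rather than proving pointwise equivariance on higher cells, it observes that within any simplex $\Delta$ the level set $F^{-1}(c)\cap\Delta$ is, by construction, the linear polytope spanned by the points of $F^{-1}(c)$ lying on the $1$--skeleton of $\Delta$. Since the deck group acts by PL maps and (by the $1$--skeleton case) carries the $1$--skeleton points of $F^{-1}(c)$ to those of $F^{-1}(\rho(g)(c))$, it carries the polytope to the corresponding polytope in $g\Delta$. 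This yields invariance of the family of level sets directly, without ever specifying or verifying an equivariant extension formula. Your approach would also go through once the extension is made canonical, but the paper's route avoids that bookkeeping.
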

\begin{proof}
	First, we observe that by definition, for any $x$ in the 1-skeleton of $\tilde{M}$, and any $g\in\pi_1(M)$, we have \[ gF(x)=F(gx). \] Hence, the deck group action preserves the level sets of $F$ restricted to the $1$--skeleton.
	
	Now, let $\Delta$ be any higher dimensional simplex. The intersection of a level set $F^{-1}(x)$ with $\Delta$
	is the polyhedron in $\Delta$ whose vertices are exactly those points in the $1$--skeleton $\Delta^1$ which lie
	in $F^{-1}(x)$. Since the deck group action acts as PL transformations, and preserves the level sets of $F$ on
	the $1$--skeleton, this implies that the deck group action preserves all level sets.
\end{proof}
Let $\tilde{\mathcal{F}}$ be the singular foliation defined by the level sets of $F$. By the previous lemma,
it descends to a singular foliation $\mathcal{F} = \mathcal{F}(\mathcal{T},\rho)$ on $M$.

The rest of this subsection is concerned with a discussion of the dependence on $\rho$. In particular, we would
like to obtain a construction which depends only on an order of $\pi_1(M)$, not an action on $\mathbb{R}$. The first
step is the following easy observation.
\begin{lem}\label{lem:obvious-change1}
	If $f:\mathbb{R}\to \mathbb{R}$ is any orientation-preserving homeomorphism which fixes $0$, then
	$\mathcal{F}(\mathcal{T},\rho) = \mathcal{F}(\mathcal{T},f\rho)$, where $f\rho$ denotes the representation
	where each element is postcomposed with $f$. 
\end{lem}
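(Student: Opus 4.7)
The plan is to show that the function $F'$ constructed from $f\rho$ equals $f \circ F$ throughout $\tilde M$. Since $f$ is an orientation-preserving bijection of $\RR$, the level sets of $F'$ at height $f(t)$ then coincide (as subsets of $\tilde M$) with those of $F$ at height $t$, which immediately gives the equality of the two foliations.

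At the vertices, the assumption $f(0) = 0$ gives $(f\rho)(g)(0) = f(\rho(g)(0))$ for every $g \in \pi_1(M)$, so $F'(gv_0) = f(F(gv_0))$. Propagating this identity along the $1$-skeleton is the heart of the argument: I would unwind the equivariant construction of $F$ on edges and observe that both the prescription on a fundamental edge and the transport rule $F(hx) = \rho(h)(F(h^{-1}x))$ behave naturally under the substitution $\rho \mapsto f\rho$. Concretely, writing $\rho' = f\rho f^{-1}$, one has $\rho'(h)(f(y)) = f(\rho(h)(y))$, so the equivariance rule for $F'$ is solved by $f \circ F$, giving $F' = f \circ F$ on the entire $1$-skeleton. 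Finally, the extension of the foliation to higher-dimensional simplices is determined by requiring level sets to be linear $(d-1)$-polytopes with vertices on the $1$-skeleton; since the preimages $F^{-1}(t) \cap \tilde{\mathcal{T}}^{(1)}$ and $(F')^{-1}(f(t)) \cap \tilde{\mathcal{T}}^{(1)}$ agree, the polytopes that form the leaves in each higher cell coincide, and thus $\mathcal{F}(\mathcal{T},\rho) = \mathcal{F}(\mathcal{T},f\rho)$.

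The one point to pin down carefully is the $1$-skeleton step. The recursive definition of $F$ on edges is phrased directly in terms of $\rho$, and one has to verify that the substitution $\rho \mapsto f\rho$ really reproduces $f \circ F$ and not some other monotonic reparametrization of $F$; this boils down to the conjugation identity $\rho'(h) \circ f = f \circ \rho(h)$ being exactly the compatibility required by the recursion, and it is the only place where the hypothesis $f(0)=0$ genuinely enters.
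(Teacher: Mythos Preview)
Your proposal is essentially the same as the paper's proof: both argue that $F'=f\circ F$ (equivalently, $F(x)=t\Leftrightarrow F'(x)=f(t)$), checking this first at vertices using $f(0)=0$, then on the $1$--skeleton via the equivariance built into the construction, and finally on higher cells by observing that the linear polytope extension is determined by the level sets on the $1$--skeleton. Your write-up spells out the conjugation identity $\rho'(h)\circ f=f\circ\rho(h)$ more explicitly than the paper does, but the route and the key observations are identical.
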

\begin{proof}
	Denote by $F$ and $F'$ the functions defining the covers of $\mathcal{F}(\mathcal{T},\rho), \mathcal{F}(\mathcal{T},f\rho)$. One then observes that 
	\[ F(x) = t \quad \Leftrightarrow \quad F'(x) = f(t). \]
	Namely, this is true by definition on the $1$--skeleton of the triangulation on $\tilde{M}$, and
	follows with an argument as in the previous lemma for all higher dimensional simplices. Thus, $F$ and $F'$ have the same level sets.
\end{proof}

In order to define the singular foliation from an order, we therefore aim to construct an (allowed) action
$\rho$ from the order, without choices. As a first step, we begin with an
order-preserving map 
\[ i: \pi_1(M) \rightarrow\mathbb{R} \] 
such that the identity $e$ is sent to $0$. We now promote $i$ to an allowed action
\[ \rho_i :\pi_1(M) \to \mathrm{Homeo}^+(\mathbb{R}) \]
in the following way:
\begin{enumerate}
	\item Define the action of $g\in \pi_1(M)$ on $i(\pi_1(M))$ by $g(i(h))=i(gh)$.
	\item Extend the action above to an action on $\mathbb{R}$ by homeomorphisms linearly in all complementary
	intervals $\mathbb{R}-i(\pi_1(M))$.
\end{enumerate}
Hence, we can now define a foliation $\mathcal{F}(\mathcal{T}, i) = \mathcal{F}(\mathcal{T}, \rho_i)$, and need
to discuss how the construction depends on $i$.
\begin{cor}\label{cor:obvious-change2}
	Any two (ambiently) isotopic embeddings $i, i'$ yield the same foliations $\mathcal{F}(\mathcal{T}, i) = \mathcal{F}(\mathcal{T}, i')$.
\end{cor}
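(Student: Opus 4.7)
The plan is to reduce the corollary to Lemma~2.3. Since $i$ and $i'$ are injective order-preserving maps from the countable set $\pi_1(M)$ into $\mathbb{R}$, the hypothesis of ambient isotopy is equivalent to the existence of an orientation-preserving homeomorphism $h:\mathbb{R}\to\mathbb{R}$ with $h(0)=0$ and $h\circ i = i'$; indeed, the constraint $i(e)=i'(e)=0$ forces the endpoint of any such isotopy to fix the origin. I would use $h$ to bridge $\rho_i$ and $\rho_{i'}$.

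Unwinding the construction of $\rho_i$, for every $g,k\in\pi_1(M)$ one has
\[ \rho_{i'}(g)(i'(k)) = i'(gk) = h(i(gk)) = h(\rho_i(g)(i(k))) = (h\rho_i h^{-1})(g)(i'(k)). \]
Hence $\rho_{i'}$ and the conjugate $h\rho_i h^{-1}$ agree on the orbit $i'(\pi_1(M))$ of $0$, but are extended differently on complementary intervals (linearly on the one hand, as the $h$-conjugate of a linear map on the other). Applying Lemma~2.3 with $f=h$ yields $\mathcal{F}(\mathcal{T},\rho_i) = \mathcal{F}(\mathcal{T}, h\rho_i h^{-1})$, so it remains to show
\[ \mathcal{F}(\mathcal{T}, h\rho_i h^{-1}) = \mathcal{F}(\mathcal{T}, \rho_{i'}). \]

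The main obstacle is this last step: it amounts to the principle that the fLOiation depends on $\rho$ only through the orbit of $0$. To establish it, I would trace through the inductive construction of $F$. Vertex values of $F$ depend only on the orbit of $0$, so the two functions coincide at every vertex of $\tilde{\mathcal{T}}$. On each edge, both functions are monotone homeomorphisms onto the same subinterval of $\mathbb{R}$ bounded by two orbit points, hence they differ only by an orientation-preserving reparameterization of that subinterval. Agreement on the orbit ensures that these edge-wise reparameterizations match at their endpoints and therefore glue into a single orientation-preserving homeomorphism $\phi:\mathbb{R}\to\mathbb{R}$. The level-set-linearity condition on each top-dimensional simplex then propagates this identification from the 1-skeleton to all of $\tilde{M}$: the level set of one function at value $t$ coincides with the level set of the other at value $\phi(t)$. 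Thus the two singular foliations are equal, completing the argument.
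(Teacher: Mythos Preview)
Your reduction to Lemma~2.3 is the right idea, but the final step has a genuine gap. You claim that the edge-wise reparametrizations $\psi_e$ (relating the two $F$'s on each edge) ``glue into a single orientation-preserving homeomorphism $\phi$'' because they agree at the endpoints of each edge-interval. This does not follow: the $F$-ranges of different edges overlap heavily (for a triangle with vertices $a<b<c$ the edges $a\to b$ and $a\to c$ have ranges $[i'(a),i'(b)]\subset[i'(a),i'(c)]$), and for the $\psi_e$'s to glue they must agree on these \emph{overlaps}, not merely at the two orbit points bounding each individual interval. In fact, unwinding your own formulas shows that the reparametrization on the edge ending at $bv_0$ is $h\rho_i(b)h^{-1}\circ\rho_{i'}(b)^{-1}$, which does depend on $b$ on the gaps of $i'(\pi_1(M))$. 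More conceptually, asking that the $\psi_e$'s agree on overlaps is exactly asking that the two functions have the same level sets on the $1$--skeleton, which is the conclusion you are trying to prove; so the argument is circular.

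The underlying assertion --- that the fLOiation depends only on the orbit of $0$ --- is in fact not true for arbitrary allowed actions: on a triangle with vertices $a<b<c$, the matching between the edges $a\to b$ and $a\to c$ is governed by $\rho(c^{-1}b)$ restricted to a gap interval, and different actions with the same orbit can disagree there. The paper sidesteps this entirely by \emph{choosing} the homeomorphism $f$ to be affine on each complementary interval of $i(\pi_1(M))$. With that choice the composite $f\circ\rho_i(g)\circ f^{-1}$ is again affine on every gap of $i'(\pi_1(M))$, hence equals $\rho_{i'}(g)$ \emph{exactly}, not just on the orbit. Lemma~2.3 then applies directly and no further step is needed. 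Your argument is easily repaired by making this same choice of $h$.
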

\begin{proof}
	If $i, i'$ are ambiently isotopic, there is a homeomorphism $f:\mathbb{R}\to\mathbb{R}$ so that
	$i' = f \circ i$. In fact, we may assume that $f$ is affine on each complementary interval in
	$\mathbb{R}-i(\pi_1(M))$.
	By definition, we then have
	\[ \rho_{i'} = f\rho_i, \]
	and thus $\mathcal{F}(\mathcal{T}, i) = \mathcal{F}(\mathcal{T}, i')$ by Lemma~\ref{lem:obvious-change1}.
\end{proof}

As a last step, we promote the construction to start with the order itself, not the embedding $i$. To this end,
given any totally ordered countable set $S=\{s_0,s_1,\dots\}$, we define a ``minimal embedding'' $i_0$ of $S$ into $\mathbb{R}$ inductively as follows:
\begin{enumerate}
	\item $i_0(s_0)=0$.
	\item For any $n>0$, if $s_n>s_p, \forall p<n$, let $i_0(s_n)=\max_{p<n}i_0(s_p)+1$; if  $s_n<s_p, \forall p<n$, let $i_0(s_n)=\min_{p<n}i_0(s_p)-1$; if $s_a<s_n<s_b$ and $s_a$ and $s_b$ are the closest elements to $s_n$ in the finite sets $\{s_0, \dots s_{n-1}\}$, from below and from above respectively, then $i_0(s_n)={i_0(s_a)+i_0(s_b)\over 2}$.
\end{enumerate}
In fact, we can describe the difference between the foliations defined by minimal embedding and arbitrary ones.

\begin{lem} \label{lem:striplemma}
	Given any totally ordered countable set $S=\{s_0,s_1,\dots\}$, any other order preserving map from $S$ to $\mathbb{R}$ can be ``collapsed'' into $i_0$. 
	In other words, two different choices of $i$ differ only in the addition and removal of disjoint intervals, and
	the application of an ambient isotopy.
\end{lem}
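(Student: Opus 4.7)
The plan is to compare an arbitrary order-preserving embedding $i\colon S\to\mathbb{R}$ with the minimal embedding $i_0$, and show they are equivalent under the three claimed operations (by symmetry this covers the case of any pair $i,i'$). The strategy is to produce a continuous, monotone, surjective map $\phi\colon\mathbb{R}\to\mathbb{R}$ that extends the canonical order-preserving bijection $i(S)\to i_0(S)$, $i(s)\mapsto i_0(s)$, and then read off the intervals to collapse as the non-singleton fibres of $\phi$.

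I would begin by classifying the maximal complementary intervals of $i(S)$ in $\mathbb{R}$. Call such an interval $(a,b)\subset\mathbb{R}\setminus i(S)$ \emph{essential} if both endpoints lie in $i(S)$; in that case order-preservation forces $a=i(s)$ and $b=i(t)$ for some consecutive pair $s<t$ in $S$. All other maximal complementary intervals (intervals with a limit endpoint in $\overline{i(S)}\setminus i(S)$, or half-infinite tails) I call \emph{non-essential}. The key structural claim is that $i_0(S)$ has \emph{no} non-essential complementary intervals. This follows inductively from the midpoint construction of $i_0$: for any Dedekind cut of $S$ whose left part has no maximum (resp.\ right part has no minimum), infinitely many elements of $S$ from that side eventually appear in the enumeration as new extrema of the running finite approximation, and each such event halves the current sup/inf gap in $i_0$, so the gap shrinks to zero.

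Given this structural fact, I would extend $\phi_0\colon i(S)\to i_0(S)$ affinely on each essential complementary interval $(i(s),i(t))$ of $i(S)$ onto the corresponding essential interval $(i_0(s),i_0(t))$ of $i_0(S)$, and constantly on each non-essential interval, sending it to the unique point of $\overline{i_0(S)}$ determined by the corresponding cut (well-defined by the previous paragraph). The result is a continuous, monotone, surjective $\phi\colon\mathbb{R}\to\mathbb{R}$ whose only non-singleton fibres are exactly the non-essential complementary intervals of $i(S)$. Collapsing these (countably many, pairwise disjoint) intervals in $\mathbb{R}$ yields a quotient space homeomorphic to $\mathbb{R}$ on which $\phi$ descends to a homeomorphism --- the promised ambient isotopy. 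Reversing the argument shows that to pass from $i_0$ back to $i$ one \emph{adds} intervals in the matching locations of $\mathbb{R}\setminus i_0(S)$.

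The main obstacle will be handling half-infinite non-essential tails, which arise precisely when $S$ has no maximum but $i(S)$ is bounded above (and symmetrically for minima). A literal pinch of such a tail produces a half-line rather than $\mathbb{R}$, so it cannot be removed as a bounded interval. I would handle this by pairing removal of each tail with the simultaneous addition of a compensating interval on the opposite side of $i(S)$, so that the ambient $\mathbb{R}$ is preserved topologically; after this preliminary step all remaining non-essential intervals are bounded and the construction of $\phi$ above applies verbatim.
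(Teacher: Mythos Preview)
Your argument is correct and follows the same strategy as the paper: build a continuous, monotone (not strictly monotone) map $\phi\colon\mathbb{R}\to\mathbb{R}$ satisfying $\phi\circ i=i_0$, and read off the intervals to be collapsed as the non-singleton fibres. The paper's version is in fact terser than yours---it defines $\phi$ on $i(S)$ by $i(s)\mapsto i_0(s)$ and extends to $\overline{i(S)}$ via one-sided limits, without explicitly isolating (as you do) the structural property of $i_0$ that makes those limits well-defined, and without discussing the unbounded-tail issue at all. Your essential/non-essential classification and the verification that $i_0$ produces no non-essential gaps are exactly the content needed to justify the paper's limit step; in that sense your write-up fills in details the paper omits rather than taking a different route.

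One small wording issue: your definition of ``non-essential'' literally declares every half-infinite complementary interval non-essential, which would include the tail $(i_0(\max S),\infty)$ when $S$ has a maximum, contradicting your structural claim about $i_0$. Your final paragraph makes clear you only mean the tails arising when $S$ has no maximum but $i(S)$ is bounded above (and symmetrically below), so this is a matter of tightening the definition rather than a gap in the argument.
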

\begin{proof}
	It is enough to construct a continuous, monotone (but generally not strictly monotone) map from $\mathbb{R}$ to itself that sends $i(G)$ to $i_0(G)$. This can be done in the following two steps:
	\begin{enumerate}
		\item On $i(G)$, let $f(i(g))=i_0(g)$.
		\item Now we extend it to $\overline{i(G)}$. Suppose $x\in \overline{i(G)}\backslash i(G)$, then either there is a sequence in $i(G)$ converging to $x$ from above but no from below, or there is a sequence from below but not above, or there are sequences converging to it from both above and below. In the previous two cases $f$ shall send $x$ to the 	limit of the image of those converging sequences under $f$. 
	\end{enumerate}
\end{proof}
\begin{cor}\label{cor:effect-of-i}
	If $i, i'$ are any two order-preserving embeddings of $\pi_1(M)$ into $\mathbb{R}$, then the foliations
	$\mathcal{F}(\mathcal{T}, i), \mathcal{F}(\mathcal{T}, i')$ differ by inserting or removing ``strips'' $L\times (0,1)$ of parallel leaves.
\end{cor}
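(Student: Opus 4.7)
My plan is to reduce, via Lemma~\ref{lem:striplemma}, to comparing $\mathcal{F}(\mathcal{T}, i)$ with $\mathcal{F}(\mathcal{T}, i_0)$ for the minimal embedding $i_0$: once this comparison is understood, chaining two instances of it (for $i$ and for $i'$) through $i_0$ gives the corollary.

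Take the continuous monotone $f: \mathbb{R} \to \mathbb{R}$ from Lemma~\ref{lem:striplemma} with $f \circ i = i_0$, and factor it as $f = g_2 \circ g_1$, where $g_1$ is an orientation-preserving homeomorphism of $\mathbb{R}$ fixing $0$, and $g_2$ is the identity outside a countable family of pairwise disjoint intervals $[a_j, b_j]$ and collapses each $[a_j, b_j]$ to a point. By construction the $[a_j, b_j]$ are disjoint from $(g_1 \circ i)(\pi_1(M))$. Applying Lemma~\ref{lem:obvious-change1} (or equivalently Corollary~\ref{cor:obvious-change2}) to $g_1$ gives $\mathcal{F}(\mathcal{T}, i) = \mathcal{F}(\mathcal{T}, g_1 \circ i)$, so the interesting step is the effect of $g_2$.

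The key geometric observation is that each constant interval $[a_j, b_j]$ of $g_2$ corresponds to a strip of parallel leaves of $\mathcal{F}(\mathcal{T}, g_1 \circ i)$. Writing $F$ for the function on $\tilde M$ associated to $g_1 \circ i$: inside each simplex $\Delta$ of $\tilde{\mathcal{T}}$ no vertex has $F$-value in $[a_j, b_j]$, so the level sets $F^{-1}(t) \cap \Delta$ for $t \in [a_j, b_j]$ are linear polytopes of constant combinatorial type, realizing $F^{-1}([a_j, b_j]) \cap \Delta$ as a product $L_\Delta \times [a_j, b_j]$; these local product pieces glue across the faces of $\tilde{\mathcal{T}}$ into a global strip $L \times [a_j, b_j]$ of parallel leaves. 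The intertwining $\rho_{i_0}(g) \circ g_2 = g_2 \circ \rho_{g_1 \circ i}(g)$ holds on the image of $g_1 \circ i$ by the definitions of the two actions, so the collection of such strips is deck-equivariant and collapsing them descends to a strip-removal on $\mathcal{F}(\mathcal{T}, g_1 \circ i)$ producing $\mathcal{F}(\mathcal{T}, i_0)$.

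The main obstacle I anticipate is that the construction of $F$ on a higher-dimensional simplex depends on the numerical vertex values and not merely on their order; so after performing the strip collapses, the resulting foliation matches $\mathcal{F}(\mathcal{T}, i_0)$ only up to a simplex-by-simplex ambient isotopy that slides leaves within product regions of the foliation. Sliding leaves within such product regions is itself a strip-insertion/removal operation in the sense of the corollary, so these local corrections can be absorbed into the stated equivalence; the nontrivial verification is that the isotopies can be chosen coherently and equivariantly across $\tilde{\mathcal{T}}$, which should follow from the canonical linear nature of the polytope interpolation and the compatibility with the deck action already used above.
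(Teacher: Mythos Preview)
Your approach is essentially the same as the paper's: reduce via Lemma~\ref{lem:striplemma} and Corollary~\ref{cor:obvious-change2} to analysing what happens when an interval (disjoint from the image of the embedding) is inserted or collapsed, and then argue that this corresponds to inserting or removing a strip of parallel leaves. The paper's own proof is two sentences and simply declares this last step ``immediate from the construction''; you supply considerably more detail.

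Two small comments. First, your description of $g_2$ as ``the identity outside a countable family of pairwise disjoint intervals $[a_j,b_j]$ and collapses each $[a_j,b_j]$ to a point'' cannot be taken literally, since such a map is not continuous; what you want is the standard monotone quotient map that collapses each $[a_j,b_j]$ and is affine on the complementary intervals, which is how Lemma~\ref{lem:striplemma} is set up. Second, the concern you raise in your final paragraph is genuine: since the level sets on a simplex are determined by the restriction of $F$ to its edges, and the edge values of $F_i$ and $F_{i_0}$ are related by the (nonlinear) collapsing map rather than by an affine map, the two foliations need not coincide leaf-for-leaf inside each simplex after the strip collapse, only up to an isotopy sliding leaves within product regions. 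The paper does not address this either; given that the statement of the corollary is itself informal (``differ by inserting or removing strips''), both proofs operate at the same level of rigor, and your explicit acknowledgement of where the informality lies is a virtue rather than a defect.
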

\begin{proof}
	By Corollary~\ref{cor:obvious-change2} and Lemma~\ref{lem:striplemma} it suffices to analyse the effect
	of modifying $i$ by inserting or collapsing an interval (not containing any points from the embedding $i$).
	Now, from the construction it is immediate that this corresponds to blowing up a leaf into a band of parallel leaves (or collapsing such a strip). 
\end{proof}

In general, it is not clear to us how the choice of $\mathcal{T}$ influences the foliation. In low dimensions, where
different triangulations can easily be connected combinatorial, it seems likely that the choice of triangulation
is immaterial, but the general case is less clear.
\begin{ques}
	Describe the relation between $\mathcal{F}(\mathcal{T}, \mathcal{O})$ and $\mathcal{F}(\mathcal{T}', \mathcal{O})$ for different $1$--vertex triangulations.
\end{ques}

\subsection{Alternative construction of the fLOiations}

In this subsection, we describe an alternative, more explicit construction of fLOiations in dimension 2. 

Consider a triangle $\Delta$ with 3 edges $\alpha$, $\beta$ and $\gamma$ in the 2-skeleton of the quotient complex $\tilde{M}/\pi_1(M)$ above, oriented so that all three edges represents positive elements in $\pi_1(M)$, and labelled such that $\alpha$ followed by $\beta$ is homotopic to $\gamma$. For each of the three edges $e=\alpha$, $\beta$ or $\gamma$, we parameterize it with an interval $[0, -i(e^{-1})]$ as follows: firstly, because $\pi_1(M)$ acts freely on the preimages of $e$, we can identify $e$ with the preimage that ends with the point $v_0$. Now the identification between this edge and $[0, -i(e^{-1})]$ is done via the continuous map $F$ (defining the fLOiation, as above) composed with multiplication by $-1$.

Following leaves of $\mathcal{F}$ identifies $\beta$ with a subinterval $[0, -i(\beta^{-1})]$ of $\gamma$. In the parameterization of edges described above, this identification is simply the identity. In the same way, $\alpha$ is identified with another subinterval $[-i(\beta^{-1}), -i(\gamma^{-1})]$ of $\gamma$, via the map $x\mapsto -\beta^{-1}(-x)$ (in the parametrisations given above).

Hence, we could alternatively define $\mathcal{F}$ by choosing the parametrisations of the edges first, and joining them with line segments according to the rule in the previous paragraph.


\begin{figure}
  \begin{tikzpicture}[scale=1.5, ray/.style={decoration={markings,mark=at position .5 with {
          \arrow[>=latex]{>}}},postaction=decorate}]
    \draw[ray](0, 0)--(0, 2.5);
    \draw[ray](0, 0)--(1.5, 1.5);
    \draw[ray](1.5, 1.5)--(0, 2.5);
    
    \draw[thick, magenta, -](0, 0)--(0, 1.5);
    \draw[thick, green, -](0, 1.5)--(0, 2.5);
    \draw[thick, cyan, -](1.5, 1.5)--(0, 2.5);
    \draw[thick, blue, -](0, 0)--(1.5, 1.5);
    \draw[purple, -](0, 1.5)--(1.5, 1.5);
    \draw[orange, -](0, 2)--(0.75, 2);
    \draw[orange, -](0, 1.75)--(1.125, 1.75);
    \draw[orange, -](0, 2.25)--(0.375, 2.25);
    \draw[purple, -](0, 0.75)--(0.75, 0.75);
    \draw[purple, -](0, 0.5)--(0.5, 0.5);
    \draw[purple, -](0, 0.25)--(0.25, 0.25);
    \draw[purple, -](0, 1)--(1,1);
    \draw[purple, -](0, 1.25)--(1.25, 1.25);
    \node at (-0.2, 1.25) {$\gamma$};
    \node at (0.8, 0.6) {$\alpha$};
    \node at (0.8, 2.3) {$\beta$};
    \draw [orange, ->] (0.7, 1.8)--(1.5, 2.6);
    \node at (1.5, 2.8) {\color{orange}Identity\color{black}};
    \draw [purple, ->] (0.2, 0.4)--(0.8, -0.2);
    \node at (0.8, -0.4) {\color{purple}$\beta$\color{black}};
    \draw [-] (2, 0.5)--(6, 0.5);
    \draw [-] (2, 1.5)--(6, 1.5);
    \draw [-] (2, 2.5)--(6, 2.5);
    \draw [-] (2.5, 0)--(2.5, 3);
    \node at (2.3, 0.3) {$0$};
    \node at (2.3, 1.3) {$0$};
    \node at (2.3, 2.3) {$0$};
    \draw [-, very thick, blue](2.5, 1.5)--(5.5, 1.5);
    \draw [-, very thick, cyan](2.5, 0.51)--(3.5, 0.51);
    \draw [-, very thick, green](2.5, 0.48)--(3.5, 0.48);
    \draw [-, very thick, magenta](3.5, 0.5)--(5, 0.5);
    \node at (3.5, 0.3){\color{blue}$\beta(0)$\color{black}};
    \node at (5.3, 0.3){\color{blue}$\beta(\alpha(0))=\gamma(0)$\color{black}};
    \draw [->, blue](2.5, 1.5)--(3.5, 0.5);
    \draw [->, blue](5.5, 1.5)--(5, 0.5);
    \draw [->, purple](3.5, 1.5)--(4, 0.5);
    \draw [->, purple](4.5, 1.5)--(4.5, 0.5);
    \draw [->, red](2.5, 2.5)--(5.5, 1.5);
    \node at (5.7, 1.7){\color{red}$\alpha(0)$\color{black}};
    \node at (4.3, 1){\color{purple}$\beta$\color{black}};
    \node at (6.3, 0.5){$\mathbb{R}$};
    \node at (6.3, 1.5){$\mathbb{R}$};
    \node at (6.3, 2.5){$\mathbb{R}$};
    \draw [->, red] (6.3, 2.3)--(6.3, 1.7);
    \draw [->, blue] (6.3, 1.3)--(6.3, 0.7);
    \node at (6.5, 2){\color{red}$\alpha$\color{black}};
    \node at (6.5, 1){\color{blue}$\beta$\color{black}};
  \end{tikzpicture}
\caption{Foliation on a 2-simplex}
\end{figure}
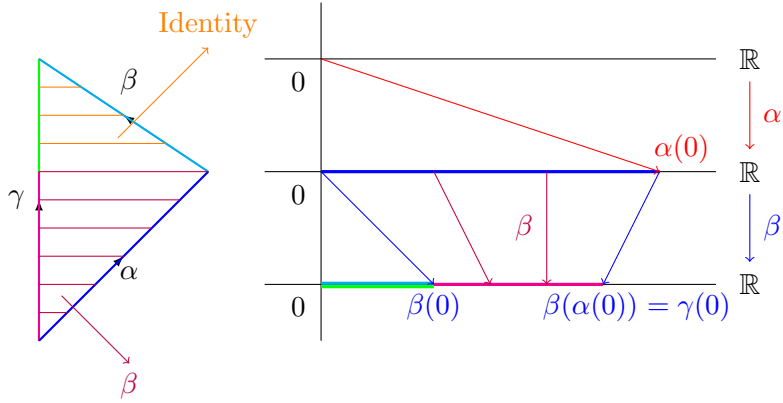


\subsection{Generalization to multi-vertex triangulations}

The above construction can be generalized to the case when the triangulation has more than one vertex: let $\tilde{M}$ be the universal cover, $D$ a fundamental domain of the deck group action, $V$ the set of vertices of $\tilde{M}$.
Choose a partial ordering on $V$ satisfying that $gv<v$ iff $g<e$. If $i$ is an order preserving embedding of $V$ into $\mathbb{R}$, then a faithful action of $G$ on $\mathbb{R}$ can be defined as above by $g(i(v))=i(gv)$ on $i(V)$ and extended linearly to $\mathbb{R}$. Now consider a map $F$ from $\tilde{M}$ to $\mathbb{R}$, defined as $i$ on the 0-skeleton, and extend to higher dimensional cells as follows:
\begin{itemize}
\item For 1-cell $e$, pick an orientation such that the value of $F$ on the end point is greater than the starting point. Now, if $e\in D$, then $F$ is linear on $e$. If $e=ge_0$, $e_0\in D$, then for any $x\in e$, $F(x)=gF(g^{-1}x)$.
\item For $d$-dimensional cells, we extend the function from the 1-faces to the cell in such a way that the level sets are all linear $d-1$ polytopes.
\end{itemize}

Arguing as in the one-vertex case, we now see that the level sets of $F$ are regular unless they pass through one of the vertices, and are invariant under deck transformations. Hence, it covers a singular foliation of $M$ as before.

\subsection{Generalization to partial ordering} \label{subsec:fLOiation_partial}

The construction for fLOiation can also be generalized to certain partial orders. More specifically, suppose that
we are given a partial order $<$ on $\pi_1(M)$ satisfying the property that ``$a<b$ and $b<a$ are both not true'' is an equivalence relation $\sim$. Instead of strongly essential, we now assume that the fundamental group elements corresponding to edges of $\mathcal{T}$ do not lie in the same equivalence class as the unit. 

Note that there is a total ordering on $G/\sim$ induced by this partial ordering, hence one can embed $G/\sim$ into $\mathbb{R}$, which provides a map from $G$ to $\mathbb{R}$, and then carry out the same construction as above. Since the group elements corresponding to edges are not equivalent to the unit under $\sim$, the result is again a singular foliation.

This construction will be useful in trying to understand fLOiation for orders which are constructed from
partial orders. For later use, we record the following:
\begin{rmk} \label{rem:blowup} Let $i$ be an embedding of $G/\sim$
  into $\mathbb{R}$, and let $J$ be a union of disjoint intervals of
  $\mathbb{R}$. Define an equivalence relation $\sim'$ by declaring
  $a\sim' b$ if and only if $i(a)$ and $i(b)$ lies in the same
  connected component of $J$. Let $j$ be the embedding of $G/\sim'$ to
  $\mathbb{R}$ obtained from $i$ by composing with a map that
  collapses each connected component of $J$.

  If $\sim'$ also satisfy the condition that fundamental group
  elements corresponding to edges do not lie in the same equivalence
  class, then the fLOiation defined $j$ is obtained from the fLOiation
  defined by $i$ by collapsing the preimage of each connected
  component of $J$ into a single leaf.
\end{rmk}
	
\section{Surfaces}\label{sec:surfaces}
In this section we discuss fLOiation on surfaces, where more precise questions can be answered.

\subsection{Torus Examples} 
Here we discuss examples of the construction on the two-dimensional torus $T$. 
We identify $\mathbb{R}^2$ with the universal cover of $S^1\times S^1$
in the usual way, and $\mathbb{Z}^2$ with the deck group as translations.

In this situation, there is a geometric way to construct orders on $\pi_1(T)$. Namely,
choose an (oriented) line $L \subset \mathbb{R}^2$ through the origin of $\mathbb{R}^2$, and denote by $V$ the halfspace to the left of $L$.  If
$L$ has irrational slope, then we declare all elements of $\pi_1(T)$ in $V$ to be positive. 
If $L$ has rational slope, we need to furthermore cut $V$ at $L^\perp$, and declare one of the halves to be positive.
Any order on $\mathbb{Z}^2$ is (up to isomorphism) one of these two classes (orders on $\mathbb{Z}^n$ are classified in \cite{teh1961construction}, \cite{robbiano1985term}. See Section~2 of \cite{koberda2011faithful} also).

\medskip If $L$ has irrational slope, then we let $\pi:\mathbb{R}^2\to L$ denote orthogonal projection.
 Then
under an identification of $L$ with $\mathbb{R}$, the restriction
$\pi\vert_{\mathbb{Z}^2}:\mathbb{Z}^2 \to L = \mathbb{R}$ yields a
bi-invariant order on $\mathbb{Z}^2$, via the action as translations.
Now, as the
defining function $F$ for the definition of the fLOiation we can simply
take the projection $\pi$ -- which implies that the resulting fLOiation is
the (irrational slope) foliation by lines orthogonal to $L$.

\medskip If $L$ has rational slope, then the orthogonal projection $\pi:\mathbb{R}^2\to L$ has a kernel $K \simeq \mathbb{Z}$. The choice of the halfspace in $V$ is equivalent to the choice of an order on $K$, and the order is
then the lexicographic order defined by these two orders. We can thus modify $\pi:\mathbb{Z}^2\to L$ to the desired function $F$ by ``blowing up'' the maps $a+K \to \pi(a)$ into injective maps to small intervals. In particular, observe that we can do this so that the resulting $F$ still has connected level sets (compare Remark~\ref{rem:blowup}).

Now, consider any element $k \in K$. Observe that then $k^n$ is bounded (in the order) by any element which maps under
$\pi$ to a positive number. This implies that the geodesic in $\mathbb{R}^2$ corresponding to $k$ converges to the leaf of the fLOiation corresponding to $\lim_{n\to\infty}F(k^n)$. Swapping perspective, this shows that 
the fLOiation (on $T$) has a leaf which spirals into the image of this geodesic (since level sets are connected). Hence, the fLOiation on $T$ has
a closed leaf in the homotopy class defined by $k$.

Summarizing, we have the following result connecting algebraic and topological properties.
\begin{prop}\label{prop:torus-case}
	Let $O$ be an order on $\pi_1(T) = \mathbb{Z}^2$, and $F_O$ the corresponding fLOiation. Then $F_O$ has a closed
	leaf if and only if $O$ is not Archimedean. 
\end{prop}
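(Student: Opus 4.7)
The plan is to match the Archimedean dichotomy with the two families of orders on $\mathbb{Z}^2$ described above. The first step is to recognize that the irrational-slope orders are precisely the Archimedean ones, and the rational-slope (lexicographic) orders are precisely the non-Archimedean ones. One direction follows from H\"older's theorem: an Archimedean ordered group embeds order-preservingly into $(\RR,+)$, and any such embedding of $\mathbb{Z}^2$ must be injective, so the associated defining line $L$ has irrational slope. Conversely, in the rational-slope case a generator $k$ of the kernel $K=\ker\pi\cong\mathbb{Z}$ satisfies $k^n<a$ for every $n>0$ and every $a$ with $\pi(a)>0$, which directly witnesses failure of Archimedeanity.

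In the Archimedean case the defining function $F$ can be taken equal to $\pi\colon\RR^2\to L$ itself, and the resulting fLOiation is the linear foliation by lines orthogonal to $L$. Because the orthogonal direction also has irrational slope, every leaf is dense in $T$, so none is closed. By Corollary~\ref{cor:effect-of-i} any other choice of order-preserving embedding $i$ produces a fLOiation differing only by insertion of parallel strips, which cannot introduce closed leaves.

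In the non-Archimedean case I would carry out the spiraling argument already sketched before the proposition. Choose $F$ from the minimal embedding, so that its level sets are connected (Remark~\ref{rem:blowup}), and let $k$ generate $K$ with $k>e$. Non-Archimedeanity makes $\{F(k^n)\}_{n\geq 0}$ a strictly increasing sequence bounded above by $F(a)$ for any $a$ with $\pi(a)>0$, hence it converges to some $t_\infty\in\RR$; the action of $k$ on $\RR$ is continuous and satisfies $k\cdot F(k^n)=F(k^{n+1})$, so $k\cdot t_\infty=t_\infty$. The main obstacle, and the step requiring the most care, is verifying that the level set $L_\infty:=F^{-1}(t_\infty)$ descends to an embedded closed curve in $T$ in the homotopy class of $k$. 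For this one checks that the $\mathbb{Z}^2$-stabilizer of $t_\infty$ is exactly $\langle k\rangle$: any $g\notin K$ with $\pi(g)>0$ satisfies $g\cdot t_\infty=\lim_n F(gk^n)$, and the sequence $\{gk^n\}$ lies in a coset of $K$ separated from $K$ by a uniform positive $F$-gap, forcing $g\cdot t_\infty>t_\infty$. Combining this with connectedness of $L_\infty$ and the fact that $t_\infty\notin F(\mathbb{Z}^2)$ (so $L_\infty$ avoids the singular point and is a genuine $1$-manifold) shows that $L_\infty/\langle k\rangle$ embeds in $T$ as a simple closed leaf representing the class of $k$.
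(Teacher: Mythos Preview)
Your proposal is correct and follows essentially the same approach as the paper: the discussion preceding the proposition already carries out the case split on the slope of $L$, realises the fLOiation as the irrational linear foliation in the Archimedean case, and in the rational case uses the boundedness of $\{F(k^n)\}$ and connectedness of level sets to produce a closed leaf in the class of $k$. You are in fact more careful than the paper in two places---explicitly invoking H\"older to identify the Archimedean orders with the irrational-slope ones, and verifying that the stabiliser of $t_\infty$ is exactly $\langle k\rangle$ with $t_\infty\notin F(\mathbb{Z}^2)$---both of which the paper leaves implicit.
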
 

\subsection{Geodesic Laminations on Hyperbolic Surfaces}
\label{sec:lamination} 

In this section, we focus on closed hyperbolic surfaces $\Sigma$ of genus $g \geq 2$, 
and $\Delta$ be a one-vertex triangulation of $\Sigma$, all edges of which are essential simple
closed curves.  Consider the universal cover $\mathbb{H}^2$ of
$\Sigma$, and the preimage $\widetilde{\Delta}$ of $\Delta$ in
$\mathbb{H}^2$. Let $\gamma$ be a side of a triangle in $\Delta$. The
preimage of $\gamma$ in $\mathbb{H}^2$ consists of countably many embedded infinite 
paths.
\begin{lem} There is a constant $K>0$ so that 
  each component $\widetilde{\gamma}$ of the preimage of an edge $\gamma$ is a $K$--quasigeodesic. 
\end{lem}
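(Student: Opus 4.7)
The plan is to exploit the fact that each edge $\gamma$ of the one-vertex triangulation is an essential closed loop based at $v$, and therefore represents a non-trivial element $g_\gamma\in\pi_1(\Sigma)$. Since $\Sigma$ is a closed hyperbolic surface, every non-trivial element acts on $\mathbb{H}^2$ as a hyperbolic isometry with a translation axis $A_\gamma\subset\mathbb{H}^2$ and positive translation length $\ell_\gamma$. Fix one lift $\widetilde{\gamma}_0$ of $\gamma$: it is a compact arc from a basepoint $v_0$ to $g_\gamma\cdot v_0$, and the component of the preimage of $\gamma$ containing it is $\widetilde{\gamma}=\bigcup_{n\in\mathbb{Z}}g_\gamma^n(\widetilde{\gamma}_0)$. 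In particular $\widetilde{\gamma}$ is $\langle g_\gamma\rangle$--invariant, and any other component of the preimage of $\gamma$ is a $\pi_1(\Sigma)$--translate of this one, hence isometric to it.

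Next I would use compactness of $\widetilde{\gamma}_0$ to control its distance to the axis. Set $D_\gamma:=\max_{x\in\widetilde{\gamma}_0}d(x,A_\gamma)<\infty$. Since $g_\gamma$ preserves $A_\gamma$ setwise, the bound $d(\cdot,A_\gamma)\leq D_\gamma$ holds on every translate $g_\gamma^n(\widetilde{\gamma}_0)$, so the whole of $\widetilde{\gamma}$ lies in the $D_\gamma$--neighbourhood of $A_\gamma$. Parameterize $\widetilde{\gamma}$ by arclength and let $L_\gamma$ be the length of $\widetilde{\gamma}_0$. For $s<t$ in the parameter interval, choose integers $m\leq n$ with $\widetilde{\gamma}(s)\in g_\gamma^m(\widetilde{\gamma}_0)$ and $\widetilde{\gamma}(t)\in g_\gamma^n(\widetilde{\gamma}_0)$. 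Then $|t-s|\leq(n-m+1)L_\gamma$. Projecting orthogonally onto $A_\gamma$ and using that $g_\gamma$ translates $A_\gamma$ by $\ell_\gamma$, we get $d(\widetilde{\gamma}(s),\widetilde{\gamma}(t))\geq (n-m)\ell_\gamma-(\mathrm{diam}\,\pi(\widetilde{\gamma}_0)+2D_\gamma)$. Combining these two estimates yields a quasi-isometric lower bound of the form $|t-s|\leq K_\gamma\, d(\widetilde{\gamma}(s),\widetilde{\gamma}(t))+K_\gamma$; the upper bound $d(\widetilde{\gamma}(s),\widetilde{\gamma}(t))\leq |t-s|$ is automatic for an arclength parametrization.

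Finally, since the triangulation $\Delta$ is finite, there are only finitely many edges to worry about, so the quantities $L_\gamma, \ell_\gamma, D_\gamma$ take only finitely many values. Taking the maximum over all edges and using that components of the preimage of a fixed edge are $\pi_1(\Sigma)$--equivalent (hence share the same quasigeodesic constant) produces a single uniform $K$ that works for every component of the preimage of every edge.

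The main obstacle is the middle step: getting the linear lower bound on $d(\widetilde{\gamma}(s),\widetilde{\gamma}(t))$ in terms of $n-m$. Morally this is the well-known fact that the orbit map of a cyclic subgroup generated by a hyperbolic element of $\mathrm{Isom}(\mathbb{H}^2)$ is a quasi-isometric embedding, but one needs to execute the nearest-point projection argument carefully enough to bound the additive defect in terms of the diameter of $\widetilde{\gamma}_0$ alone, so that the uniform $K$ in the last step genuinely exists.
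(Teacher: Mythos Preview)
Your argument is correct, and in spirit it is the same as the paper's: both reduce to the fact that the cyclic subgroup $\langle g_\gamma\rangle$ is undistorted in $\pi_1(\Sigma)$ (equivalently, the orbit $\{g_\gamma^n v_0\}$ is a quasigeodesic in $\mathbb{H}^2$), and then observe that replacing each orbit point by the bounded arc $\widetilde{\gamma}_0$ cannot spoil quasigeodesicity. The difference is one of packaging. The paper simply invokes the general theorem that infinite cyclic subgroups of word-hyperbolic groups are undistorted, and then passes from the Cayley graph to $\mathbb{H}^2$ via the $\pi_1(\Sigma)$-equivariant quasi-isometry. You instead unfold this in the concrete setting of $\mathbb{H}^2$, using the translation axis $A_\gamma$ and nearest-point projection to obtain the linear lower bound by hand. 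Your route is more elementary and self-contained (no appeal to hyperbolic group theory), at the cost of the bookkeeping you flag in your last paragraph; the paper's route is shorter but imports a nontrivial black box. Either way, uniformity of $K$ comes, as you say, from finiteness of the edge set and deck-transitivity on components.

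One small technical remark on your middle step: you do not actually need the additive $2D_\gamma$ correction, since nearest-point projection onto a geodesic in $\mathbb{H}^2$ is $1$-Lipschitz; hence $d(\widetilde{\gamma}(s),\widetilde{\gamma}(t))\geq d(\pi(\widetilde{\gamma}(s)),\pi(\widetilde{\gamma}(t)))$ already, and the right-hand side is bounded below by $(n-m)\ell_\gamma-\mathrm{diam}\,\pi(\widetilde{\gamma}_0)$. This cleans up the constants and removes the concern you raise at the end.
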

\begin{proof}
  The vertices of $\widetilde{\Delta}$ are labelled by elements of the
  deck group $\pi_1(\Sigma)$. A component $\widetilde{\gamma}$ as in the lemma contains vertices of the form 
  $c\cdot g^i, i \in \mathbb{Z}$ for some $c \in \pi_1(\Sigma)$, and $g$ the label of the edge $\gamma$. 
  Now recall that $\pi_1(\Sigma)$ is a hyperbolic group, and any infinite cyclic subgroup of a hyperbolic group is 
  undistorted \cite{bridson2013metric}. Thus, $c\cdot g^i, i \in \mathbb{Z}$ is a quasigeodesic in $\pi_1(\Sigma)$, and therefore $\mathbb{H}^2$. Hence, so is $\widetilde{\gamma}$ (as the segments connecting consecutive vertices are lifts of
    $\gamma$ and therefore have the same, finite length). The uniformity statement follows as the triangulation only has finitely many edges.
\end{proof}
We call a preimage $\widetilde{\gamma}$ of an edge $\gamma$  as in the lemma a
\emph{wall (of type $\gamma$)}. The complement of $\widetilde{\gamma}$
in $\mathbb{H}^2$ thus consists of two (quasiconvex) halfspaces. We denote
their boundary intervals by $I(\widetilde{\gamma})_i, i=1,2$. They intersect in the boundary
points of  $\widetilde{\gamma}$.

\smallskip
The first main result of this section is the following
\begin{prop}\label{prop:endpoint-criteria}
  Suppose that $L:[0, \infty) \to \mathbb{H}^2$ is an embedded path
  so that each intersection with a wall is transverse and in at most one point, and which intersects
  infinitely many walls. Then $L$ has a well-defined endpoint in the
  circle at infinity.

  If $L:(-\infty, \infty) \to \mathbb{H}^2$ intersects each wall it intersects transversely, and in at most one point, and
  both the positive and negative ray intersect infinitely many walls, then the corresponding
  endpoints are distinct.
\end{prop}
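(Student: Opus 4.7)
My plan is to analyze the nested family of half-spaces that $L$ is forced into by crossing successive walls, and to show using quasigeodesic stability together with local finiteness of walls that this family collapses visually onto a single point at infinity. For the second statement I will then apply a counting argument based on the fact that only finitely many walls can have a fixed boundary point as an endpoint.

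For the first statement I will enumerate the crossing times $t_1 < t_2 < \cdots$ and let $H_n$ denote the component of $\HH^2 \setminus W_n$ containing $L([t_n,\infty))$. A short continuity argument, using that $L$ crosses each wall at most once, gives $L(0) \notin H_n$ for every $n$. Since there are only finitely many edge types, after passing to a subsequence I may assume that all walls $W_{n_k}$ are lifts of a single edge $\gamma$, and hence pairwise disjoint because $\gamma$ is embedded. Disjointness together with the fact that $L$ crosses $W_{n_{k+1}}$ while inside $H_{n_k}$ forces $W_{n_{k+1}} \subset H_{n_k}$, and combined with $L(0) \notin H_{n_{k+1}}$ this identifies $H_{n_{k+1}}$ as the ``pocket'' component of $\HH^2 \setminus W_{n_{k+1}}$ contained in $H_{n_k}$, giving $H_{n_{k+1}} \subset H_{n_k}$. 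Local finiteness of the $\pi_1(\Sigma)$-orbit of walls, which follows from properness of the action on $\HH^2$, yields $d(L(0),W_{n_k}) \to \infty$. By the Morse lemma $W_{n_k}$ is uniformly close to the geodesic sharing its pair of endpoints on $\partial\HH^2$, and so the visual angle subtended by $W_{n_k}$ at $L(0)$ decays exponentially. Consequently the nested arcs $I_{n_k} = \overline{H_{n_k}} \cap \partial\HH^2$ shrink to a single point $\xi$, and since $L([t_{n_k},\infty)) \subset H_{n_k}$ with the pocket $H_{n_k}$ contained in any prescribed neighbourhood of $\xi$ in $\overline{\HH^2}$ for $k$ large, I conclude $L(t) \to \xi$.

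For the second statement I will apply the first part to both tails of $L$ to obtain endpoints $\xi_+$ and $\xi_-$, and suppose for contradiction that $\xi_+ = \xi_- =: \xi$. Given any wall $W$ for which $\xi$ is not an endpoint, the component of $\HH^2 \setminus W$ whose closure at infinity contains $\xi$ absorbs both tails of $L$; because $L$ crosses each wall at most once, this forces $L$ not to cross $W$ at all. Hence every wall that $L$ crosses must have $\xi$ as one of its two endpoints at infinity. But the endpoints of walls are the axis endpoints of conjugates of the (primitive) edge elements, and $\Stab(\xi) \subset \pi_1(\Sigma)$ is at most infinite cyclic, so only finitely many conjugates of edge elements can lie in $\Stab(\xi)$. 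This yields only finitely many walls with $\xi$ as an endpoint, contradicting that each tail of $L$ crosses infinitely many walls.

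I expect the main obstacle to be the geometric step of turning the visual shrinking $|I_{n_k}| \to 0$ into genuine convergence $L(t)\to\xi$ in $\overline{\HH^2}$: it is not enough that the boundary arcs shrink; one needs the entire pocket $H_{n_k}$ to sit inside a prescribed neighbourhood of $\xi$. This requires combining the shrinking visual angle with the uniform lower bound $d(L(0),W_{n_k})\to\infty$, so that every point of $H_{n_k}$ is simultaneously close to $\partial\HH^2$ in the disk model and close to $\xi$ in visual direction. Once this is secured, the counting argument in the second statement follows routinely from the discreteness of $\pi_1(\Sigma)$ acting on $\partial\HH^2$ and the fact that edges are primitive.
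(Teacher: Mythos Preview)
Your argument is correct in both parts, and for the first statement it is essentially the paper's proof with a different endgame: both you and the paper pass to a subsequence of walls $W_{n_k}$ of a single edge type to get nested half-spaces, but where the paper argues by contradiction (if $\bigcap I_{n_k}=[\alpha,\beta]$ then the axes of a sequence of conjugates have endpoints converging to the distinct pair $(\alpha,\beta)$, which cannot happen in a cocompact Fuchsian group), you instead show directly that $d(L(0),W_{n_k})\to\infty$ by local finiteness of the lifts of an edge and then use the Morse lemma plus visual--angle decay to force $|I_{n_k}|\to 0$. These are two phrasings of the same underlying fact (properness of the deck action), and your version has the mild advantage of being quantitative and of not invoking any unstated Fuchsian lemma; the paper's version is a line shorter.

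For the second statement your route is genuinely different from the paper's. The paper simply picks times $r<0<s$ at which $L$ meets walls $W_-,W_+$ of the \emph{same} edge type; since $W_-$ and $W_+$ are then disjoint and $L$ crosses each only once, the two tails of $L$ lie in the two outer complementary regions of $W_-\cup W_+$, whose boundary arcs on $S^1$ are disjoint, so $\xi_+\neq\xi_-$. This uses nothing beyond disjointness of lifts of a simple closed curve. Your stabiliser argument is correct---if $\xi_+=\xi_-=\xi$ then every wall $L$ crosses has $\xi$ as an ideal endpoint, but $\Stab(\xi)$ is cyclic and a conjugate of each (primitive) edge element can appear in it at most twice, so only finitely many walls have $\xi$ as an endpoint---but it costs a bit more (cyclicity of point stabilisers, primitivity of edge elements, and the centraliser computation that distinct walls of one type give distinct conjugates). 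The paper's approach is more elementary; yours would still work in settings where you might not have such a clean ``same--type walls are disjoint'' statement.
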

\begin{proof}
  Let $0\leq t_1 < t_2 < \cdots$ be all positive real numbers so that
  $L(t_i)$ lies in a wall $w_i$. By the assumption that $L$ intersects
  each wall in at most one point, the path $L:[t_i, \infty) \to
  \mathbb{H}^2$ is contained in one of the halfspaces $C_i$ defined by
  $w_i$; we let $I_i$ be the boundary interval of that halfspace.  To
  show the proposition, it suffices to show that $\bigcap_i I_i$
  consists of a single point.  Namely, since the $w_i$ are
  quasigeodesics with uniform constants, this would imply that the
  intersection of the closures $\overline{C_i}$ of all $C_i$ also
  consists of a single point at infinity.

  \smallskip Since $\Delta$ has only finitely many edges, there are
  indices $n_1 < n_2 < \cdots$ so that all $w_{n_i}$ are lifts
  $\widetilde{\gamma}_i$ of the same edge $\gamma$ of $\Delta$. The
  corresponding halfspaces $C_{n_i}$ are nested (since the
  $\widetilde{\gamma}_i$ are all disjoint). Assuming that $\bigcap_i I_i$ is 
  not a single point, we conclude that $\bigcap_i I_{n_i}$ is also not a single point.

  Thus, $\bigcap_i I_{n_i} = [\alpha, \beta]$ for some circular arc
  $[\alpha, \beta]\subset S^1$, and the endpoints of $\widetilde{\gamma}_i$
  converge to $\alpha, \beta$.

  Suppose that $\gamma$ corresponds to the element $g\in\pi_1(\Sigma)$
  in the deck group. Then there is a lift $\widetilde{\gamma}$ whose endpoints
  agree with the endpoints of the axis of $g$. Since all  $\widetilde{\gamma}_i$ are lifts
  of $\gamma$, we have that  $\widetilde{\gamma}_i = g_i \widetilde{\gamma}$.

  Thus, we conclude that the endpoints of the axes of the conjugates $g_igg_i^{-1}$
  (which agree with the endpoints of $\widetilde{\gamma}_i$) converge to $\alpha \neq \beta$.
  But this is impossible for a cocompact Fuchsian group.

  \medskip It remains to show the last claim about distinct endpoints. To this end, note that
  under the assumptions there will be numbers $r < 0 < s$ so that $L(r), L(s)$ are both on 
  walls which are lifts of the same edge $\gamma$. This implies that the corresponding walls
  are disjoint, and in fact the same is true for the boundary intervals of the halfspaces which
  $L(t), t\geq s$ and $L(t), t \leq r$ are contained in. This shows that the endpoints are distinct.
\end{proof}

We now aim to improve this argument to study $2$--dimensional fLOiations, and their dependence on
the order. The first is the following consequence of Proposition~\ref{prop:endpoint-criteria}
\begin{cor}\label{cor:essential-leaves}
	Let $\mathcal{F}$ be a fLOiation of $\Sigma$ determined by the $1$--vertex triangulation $\Delta$
	and an embedding $\rho: \pi_1(\Sigma,v) \to \mathrm{Homeo}^+(\mathbb{R})$. Then each leaf of $\mathcal{F}$
	lifts in $\widetilde{\Sigma}$ to a bi-infinite path with well-defined endpoints at infinity.
\end{cor}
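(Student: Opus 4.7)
The plan is to verify the two hypotheses of Proposition~\ref{prop:endpoint-criteria} for each lift $\widetilde{L} = F^{-1}(t) \subset \widetilde{\Sigma}$ of a non-singular leaf of $\mathcal{F}$. Since $t$ is not attained at any vertex of $\widetilde{\Delta}$, $\widetilde{L}$ avoids the $0$-skeleton, and the PL construction of $F$ makes it a one-dimensional submanifold without boundary.

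First I would verify the ``at most one transverse intersection per wall'' condition. Inside each top-dimensional simplex the level sets of $F$ are a parallel family of honest line segments, so $F$ is strictly monotone along every edge of $\widetilde{\Delta}$; in particular every intersection of $\widetilde{L}$ with an edge is transverse and occurs in at most one point. To extend this from single edges to whole walls, let $\widetilde{\gamma}$ be a wall of type $\gamma$ with corresponding deck element $g$. The vertices along $\widetilde{\gamma}$ take the form $c g^n v_0$ for $n \in \mathbb{Z}$, so their $F$-values are $\rho(c g^n)(0) = \rho(c)(\rho(g)^n(0))$. Essentialness of $\mathcal{T}$ gives $g \neq e$, so by the definition of an allowed representation the sequence $\rho(g)^n(0)$ is strictly monotone in $n$; composing with the orientation-preserving $\rho(c)$ preserves monotonicity, so the vertex values along $\widetilde{\gamma}$ are monotone as well. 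Together with the edgewise monotonicity this says $F$ is monotone on the entire wall, so $F^{-1}(t)$ meets $\widetilde{\gamma}$ in at most one point.

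Second I would show $\widetilde{L}$ is homeomorphic to $\mathbb{R}$ and that each of its two rays crosses infinitely many walls. The key observation is that $\widetilde{L}$ meets each triangle of $\widetilde{\Delta}$ in at most a single line segment, since in a two-simplex the level sets of $F$ are a parallel family of segments parametrised injectively by $F$-value. Hence $\widetilde{L}$ never revisits a triangle; a compact circular component would traverse a cyclic sequence $\Delta_0, \Delta_1, \ldots, \Delta_n = \Delta_0$ and is therefore ruled out. Following $\widetilde{L}$ in either direction therefore passes through an infinite sequence of distinct triangles, crossing an edge between consecutive triangles. Since each wall contains at most one such crossing, each ray of $\widetilde{L}$ meets infinitely many walls. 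Proposition~\ref{prop:endpoint-criteria} then produces two distinct well-defined endpoints at infinity, proving the corollary.

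The main technical point to get right is the monotonicity of $F$ along an entire wall. Once this is in hand, the result follows by combining the PL description of leaves inside individual triangles with the previous proposition; everything else is a matter of unwinding definitions.
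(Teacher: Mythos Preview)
Your proof is correct and follows the same strategy as the paper: verify the hypotheses of Proposition~\ref{prop:endpoint-criteria} by showing that $F$ is injective (monotone) along each wall, and that leaves cross infinitely many edges in both directions. Your treatment is in fact slightly more explicit than the paper's---you spell out why leaves cannot be closed (no triangle is revisited) and why the vertex sequence $\rho(cg^n)(0)=\rho(c)\bigl(\rho(g)^n(0)\bigr)$ is monotone, whereas the paper simply asserts that each leaf meets infinitely many edges and that $F$ restricted to $cg^i(e)$ lands in $[\rho(cg^i)(0),\rho(cg^{i+1})(0)]$---but the substance is the same.
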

\begin{proof}
	Given Proposition~\ref{prop:endpoint-criteria} we need to check that walls are intersected transversely,
	in at most one point, and infinitely often in both directions. Transversality of intersections is clear from the
	construction. 

	Next, we argue that no wall is intersected more than once by a leaf. Namely, suppose that a leaf segment $l_0$ would intersect the same wall $w$ in two points. This would
	imply that along the wall $w$, two points map to the same point in $\mathbb{R}$ under $F$ (as leaves are level sets). On the other hand, the wall consists of images $c g^i(e), i \in \mathbb{Z}$ for some edge $e$ and group
	elements $c, g$ (where $c$ corresponds to some point on the wall, and $g$ is the group element corresponding to the edge $e$). By construction of the function $F$ defining the fLOiation, it restricts
	on $c g^i(e)$ as an injective function taking values in $[\rho(cg^i), \rho(cg^{i+1})]$, where $\rho$ is the embedding of $G$ into $\mathbb{R}$. Hence, the restriction of $F$ to the wall is injective, showing the claim.
	
	Finally, observe that since every wall is intersected at most once, and every leaf intersects infinitely many edges of the triangulation in both directions, leaves also intersect infinitely many walls in both directions.
\end{proof}
As a consequence, \emph{every surface fLOiation has a well-defined straightening}. We denote
by
\[ S: \{\rho:\pi_1(\Sigma) \to \mathrm{Homeo}^+(\mathbb{R}) \mbox{ allowed}\} \to \mathcal{GL}(\Sigma) \]
the map which assigns to an allowed representation $\rho$ the straightening of the associated
fLOiation, seen as a geodesic lamination on $\Sigma$. Observe also that by results in Section~\ref{sec:singular-foliation}, the image
$S(\rho)$ only depends on the order defined by $\rho$, not $\rho$ itself.

Note that the map $S$ is in fact the composition of the maps 
$$\{\rho:\pi_1(\Sigma) \to \mathrm{Homeo}^+(\mathbb{R})\mbox{ allowed}\} \to \{ \mbox{fLOiations} \} \to \mathcal{GL}(\Sigma),$$
where the first map is our construction in Section 1 and the second map is the straightening which is allowed by Corollary \ref{cor:essential-leaves}. Abusing notation, we also just use $S$ to denote the second map in the composition. 

Next, we will discuss the dependence of a fLOiation on the embedding $\rho: \pi_1(\Sigma,v) \to \mathrm{Homeo}^+(\mathbb{R})$. The main ingredient is the following version of Proposition~\ref{prop:endpoint-criteria}.
\begin{prop}\label{prop:endpoint-convergence}
	Let $D\subset \mathbb{H}^2$ be a fundamental domain for the $\pi_1(\Sigma)$--action.
	For each $\epsilon>0$ there is a compact set $K \subset \mathbb{H}^2$ so that the following holds.
	
	Suppose that $L_1, L_2:[0, \infty) \to \mathbb{H}^2$ are embedded paths which intersect $D$, and which satisfy the prerequisites of Proposition~\ref{prop:endpoint-criteria}.
	Further suppose that for each wall $w$ which intersects $K$, the path $L_1$ intersects $w$ if
	and only if the path $L_2$ intersects $w$. 
	
	Then the endpoints of $L_1, L_2$ guaranteed by  Proposition~\ref{prop:endpoint-criteria} have distance
	at most $\epsilon$ (with respect to the standard angular metric on $S^1$).
\end{prop}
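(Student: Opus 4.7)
The proof combines a combinatorial observation with a uniform geometric bound. Fix a basepoint $o \in D$ and use the angular metric at $o$ on $S^1 = \partial \mathbb{H}^2$. Each $L_i$ meets each wall $w$ in at most one point (a prerequisite of Proposition~\ref{prop:endpoint-criteria}), so a parity argument shows that $L_i$ crosses $w$ if and only if $w$ separates $L_i(0)$ from $\xi_i$. Passing to the tail after $L_i$ first enters $D$, we may assume $L_i(0) \in D$. Then for any wall $w$ far enough from $o$ that $D$ lies strictly on one side of $w$, ``$L_i$ crosses $w$'' is equivalent to ``$w$ separates $D$ from $\xi_i$''.

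The geometric ingredient is that far walls subtend small angles at $o$. Since walls are uniform quasigeodesics (with the constant $K$ from the previous lemma), standard Morse-type estimates yield a function $\phi(R) \to 0$ as $R \to \infty$ such that any wall $w$ with $d(o,w) \geq R$ has its ``far'' boundary arc $I^{far}_w$ on $S^1$ (the one bounding the halfspace not containing $D$) of angular diameter at most $\phi(R)$. Choose $R_0 > \mathrm{diam}(D)$ with $\phi(R_0) < \epsilon/2$.

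The main step is the following uniform existence claim: there is $R_1 > R_0$ such that every $\xi \in S^1$ admits a wall $w$ separating $D$ from $\xi$ with $R_0 \leq d(o,w) \leq R_1$. To prove it, let $S_R \subset S^1$ be the set of $\xi$ admitting such a witness with $d(o,w) \leq R$. Each $S_R$ is open, since a witness $w$ for $\xi$ automatically witnesses every $\xi'$ in the open arc $I^{far}_w$. The union $\bigcup_R S_R$ equals $S^1$: by minimality of the Fuchsian action on $\partial \mathbb{H}^2$, the set of wall-endpoint pairs is dense in $S^1 \times S^1$ away from the diagonal, so any $\xi$ admits walls whose two $S^1$-endpoints bracket $\xi$ arbitrarily closely, which forces their distance from $o$ to be arbitrarily large. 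Compactness of $S^1$ then yields a finite $R_1$ with $S_{R_1} = S^1$.

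To finish, set $K := \overline{B(o,R_1)}$; any wall $w$ with $d(o,w) \leq R_1$ meets $K$ at its closest-to-$o$ point. Given $L_1, L_2$ as in the hypothesis, apply the claim to $\xi_1$ to obtain $w^*$ with $R_0 \leq d(o,w^*) \leq R_1$, so $w^* \cap K \neq \emptyset$ and $L_1$ crosses $w^*$; by the hypothesis $L_2$ crosses $w^*$ as well, so $w^*$ also separates $D$ from $\xi_2$. Then both $\xi_1, \xi_2$ lie in $I^{far}_{w^*}$, which has angular diameter $< \epsilon/2 < \epsilon$. The main obstacle is the uniform existence claim; the individual ingredients (Morse estimates, minimality of the Fuchsian action, compactness of $S^1$) are standard, but must be combined carefully. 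A minor subtlety is the bookkeeping around the initial segment of $L_i$ before it enters $D$, which can be handled by restricting to the tail since the endpoint $\xi_i$ depends only on the behavior at infinity.
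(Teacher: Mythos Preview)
Your proof is correct and follows essentially the same strategy as the paper's: cover $S^1$ by small ``far intervals'' of walls lying far from $D$, and use that a ray starting in $D$ crosses such a wall if and only if its endpoint lies in that far interval (the paper simply picks finitely many such walls ``by cocompactness'' and lets $K$ meet them, while you extract a uniform radius bound via a compactness argument on $S^1$ and take $K$ to be a ball). One small imprecision worth flagging: the density of wall-endpoint pairs in $S^1\times S^1\setminus\Delta$ does not follow from minimality of the $\pi_1(\Sigma)$--action on $S^1$ alone, but from the stronger (standard) fact that a cocompact Fuchsian group acts minimally on the space of distinct ordered pairs; the paper's ``by cocompactness'' is glossing over the same point.
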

\begin{proof}
	By cocompactness of the action of $\pi_1(\Sigma)$ on $\mathbb{H}^2$, there are walls $w_1, \ldots, w_N$ so that the endpoints of each $w_i$ cut off an interval $I_i$ of 
	size $<\epsilon$ in $S^1$, and each point in $S^1$ lies in at least one such. Now, choose a compact set $K$ which intersects each $w_i$. By possibly choosing the $w_i$ further from the fundamental domain $D$, we may assume that
	there is no quasigeodesic with the constants from Proposition~\ref{prop:endpoint-criteria} which can intersect $D$
	and have both endpoints in $I_i$. 
	
	Suppose that $L_1, L_2$ satisfy the conditions of the proposition. Then, each $L_i$ is, by Proposition~\ref{prop:endpoint-criteria} and the choices above, a quasigeodesic whose endpoints lie in 
	lie in different of these intervals. Hence, $L_i$ has an endpoint in $I_j$ if and only if it intersects
	$w_j$. Now, by assumption, this means that $L_1, L_2$ have endpoints in the same intervals $I_r, I_s$. This shows the proposition.
\end{proof}

\begin{cor}\label{cor:continuity}
	The map $S$ is continuous, for the topologies of pointwise convergence on \newline $Hom(\pi_1(\Sigma, v), \mathrm{Homeo}^+(\mathbb{R}))$ and the Hausdorff topology on geodesic lamination space.
\end{cor}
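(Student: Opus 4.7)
The plan is to combine pointwise convergence of the defining function $F$ with Proposition~\ref{prop:endpoint-convergence}. Assume $\rho_n \to \rho$ pointwise in $\mathrm{Hom}(\pi_1(\Sigma,v), \mathrm{Homeo}^+(\mathbb{R}))$, and let $F, F_n \colon \tilde{\Sigma} \to \mathbb{R}$ be the functions whose level sets define $\mathcal{F}_\rho, \mathcal{F}_{\rho_n}$. By the construction of Section~\ref{subsec:fLOiationgeneral}, on vertices we have $F_n(g v_0) = \rho_n(g)(0) \to \rho(g)(0) = F(g v_0)$; since the edge- and cell-level extensions depend continuously on boundary values (they are defined by linear interpolation and an affine PL extension), $F_n \to F$ uniformly on every compact subset of $\tilde{\Sigma}$.

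Fix $\epsilon > 0$ and let $K \subset \mathbb{H}^2$ be the compact set furnished by Proposition~\ref{prop:endpoint-convergence} for this $\epsilon$, enlarged to contain a fundamental domain $D$. Only finitely many edges of $\tilde{\mathcal{T}}$ meet $K$. For a level $t \in \mathbb{R}$, the level set $F^{-1}(t)$ crosses an edge $[p,q]$ if and only if $t$ lies strictly between $F(p)$ and $F(q)$, which is an open condition in the endpoint values. Provided $t$ is not one of the finitely many critical values $\{F(v) : v \text{ a vertex in/near } K\}$, pointwise convergence $F_n \to F$ on vertices forces $F_n^{-1}(t)$ to cross the same edges inside $K$ as $F^{-1}(t)$ for all large $n$. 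Critical values form a discrete set, so any $t$ may be perturbed by an arbitrarily small amount to reduce to this case, and leaves vary continuously in level.

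Now, given any leaf $\ell$ of $\mathcal{F}_\rho$ whose lift $\tilde{\ell}$ meets $D$, perturb the level if necessary and find, for each large $n$, a leaf $\ell_n$ of $\mathcal{F}_{\rho_n}$ with a lift meeting $D$ and sharing the wall-crossing pattern of $\tilde{\ell}$ inside $K$. Proposition~\ref{prop:endpoint-convergence} gives that the corresponding endpoints in $S^1$ are $\epsilon$-close; since a pair of geodesics in $\mathbb{H}^2$ with $\epsilon$-close endpoints are Hausdorff-close on every fixed compact set, the straightened representatives of $\ell$ and $\ell_n$ are Hausdorff-close on compact regions of $\mathbb{H}^2$. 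Running the argument symmetrically (every leaf of $S(\rho_n)$ through $D$ is close to some leaf of $S(\rho)$) and invoking cocompactness of the $\pi_1(\Sigma)$-action to reduce arbitrary leaves to leaves through $D$ yields Hausdorff convergence $S(\rho_n) \to S(\rho)$ as geodesic laminations on $\Sigma$.

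The main obstacle is the treatment of critical levels, where a leaf passes through the singular point $v_0$ and the combinatorial crossing pattern with edges can flip under arbitrarily small perturbations; it is handled by the discreteness of critical values together with the fact that the lamination $S(\rho)$ is the Hausdorff closure of its regular leaves, so it suffices to compare only regular levels. A minor additional point is that one must check that the construction of Section~\ref{subsec:fLOiationgeneral} really does make $F$ depend continuously on $\rho$ even on $2$-cells, but this follows from the definition of $F$ on a $2$-simplex as an affine interpolation between values on the $1$-skeleton.
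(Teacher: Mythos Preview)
Your argument is correct and follows essentially the same route as the paper: uniform convergence of the defining functions $F_n \to F$ on compacta, followed by Proposition~\ref{prop:endpoint-convergence} to control endpoints via the finite wall-crossing pattern in a large compact set, and a reduction to nonsingular leaves.

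Two small remarks. First, your description of $F$ on edges as ``linear interpolation'' is not quite what the construction says: on an edge ending at $gv_0$ with $g\neq 1$ the map is $\rho(g)$ composed with a linear map, not linear outright. This does not harm your argument, since the conclusion you actually use---that $F_\rho$ depends continuously on $\rho$ on compact sets---remains valid (pointwise convergence $\rho_n(g)\to\rho(g)$ in $\mathrm{Homeo}^+(\mathbb{R})$ gives uniform convergence on the relevant bounded interval). Second, where you invoke symmetry for the reverse Hausdorff inclusion, the paper instead parametrizes leaves by basepoint $x\in D$ rather than by level $t$ and handles the reverse inclusion by a short contradiction argument (a putative extra limit geodesic would cross a pair of walls that no leaf of $S(\rho_0)$ crosses, contradicting convergence of wall patterns). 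Your symmetric version works too, once you note that the finitely many critical values of $F_n$ on $K$ converge to those of $F$, so a common regular level can always be chosen.
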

\begin{proof}
	Let $D$ be a fundamental domain for the action of $\pi_1(\Sigma, v)$ on $\mathbb{H}^2$, and let $\rho_0$ be
	some allowed representation.
	
	Given any allowed representation $\rho$, denote by $F_\rho:\mathbb{H}^2\to \mathbb{R}$ the function
	defining the fLOiation. Observe that, by construction, for any compact set $K \subset \mathbb{H}^2$,
	and any $\epsilon>0$, if $\rho$ is close enough to $\rho_0$, then $F_\rho, F_{\rho_0}$ are $\epsilon$--close
	on $K$. In particular, intersections of level sets with $K$ can be guaranteed to be arbitrarily close to each other in the Hausdorff topology.
	
	This implies that, if $x \in D$ is a point so that the leaf $L_x$ of $\tilde{\mathcal{F}}_{\rho_0}$ through $x$ is nonsingular, and $w$ is any given wall $w$, then for any representation $\rho$ sufficiently close to $\rho_0$,
	the leaf of $\tilde{\mathcal{F}}_{\rho}$ through $x$ is also nonsingular, and intersects $w$.
	
	Hence as $\rho \to \rho_0$, the straightened fLOiations $S(\tilde{\mathcal{F}}_{\rho})$ have leaves converging to
	$S(L_x)$. Since we can take a finite number of leaves $L_{x_1},\ldots, L_{x_n}$ whose closure (on $\Sigma$)
	is the whole fLOiation, this implies that the straightened fLOiations $S(\tilde{\mathcal{F}}_{\rho})$ converge to a lamination which contains $S(\tilde{\mathcal{F}}_{\rho_0})$.
	
	Finally, suppose that this containment is proper. Then there is sequence of straightened leaves $S(L_i)$ of
	$\tilde{\mathcal{F}}_{\rho_i}$ which converge to a geodesic $g$ which is not contained in $S(\tilde{\mathcal{F}}_{\rho_0})$. Since laminations are closed, this implies that there is a pair of walls 
	$w_1, w_2$ intersected by $g$ so that no leaf of $S(\tilde{\mathcal{F}}_{\rho_0})$ intersects both.
	Up to passing to a subsequence we may assume that $L_i$ are leaves through
	points $y_i \in D$ which converge to $y\in D$. But now, if the leaf $L_y$ of $\tilde{\mathcal{F}}_{\rho_0}$ does not intersect $w_1$ or $w_2$, then the same will be true for the leaves $L_i$ by the above, which is a contradiction.
\end{proof}

\begin{cor} 
  For the canonical embedding and extension to allowed actions
  described in Section~\ref{sec:singular-foliation}, the assignment of the straightened
  fLOiation $S(\mathcal{F}(\rho_\mathcal{O}))$ to an order $\mathcal{O}$ on $\pi_1(\Sigma)$ is
  continuous. 
\end{cor}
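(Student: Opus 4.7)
The plan is to deduce this corollary from Corollary~\ref{cor:continuity} by showing that $\mathcal{O}\mapsto\rho_\mathcal{O}$ is continuous from the Sikora topology into the pointwise convergence topology on $\Hom(\pi_1(\Sigma,v),\mathrm{Homeo}^+(\mathbb{R}))$; the claim then follows by composition. Recall that in the Sikora topology, $\mathcal{O}_k\to\mathcal{O}$ means that for every finite subset $F\subset\pi_1(\Sigma)$, eventually $\mathcal{O}_k|_F=\mathcal{O}|_F$.

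First I would fix an enumeration $s_0,s_1,\dots$ of $\pi_1(\Sigma)$ and unpack the inductive ``max$+1$/min$-1$/midpoint'' construction of the canonical embedding: by construction $i_0^\mathcal{O}(s_n)$ depends only on the restriction of $\mathcal{O}$ to $\{s_0,\ldots,s_n\}$. Hence for each fixed $g\in\pi_1(\Sigma)$, the assignment $\mathcal{O}\mapsto i_0^\mathcal{O}(g)$ is locally constant in the Sikora topology, and in particular $i_0^{\mathcal{O}_k}(g)=i_0^{\mathcal{O}_0}(g)$ eventually whenever $\mathcal{O}_k\to\mathcal{O}_0$.

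The next step is to verify pointwise convergence $\rho_{\mathcal{O}_k}(g)(x)\to\rho_{\mathcal{O}_0}(g)(x)$ on $\mathbb{R}$. At anchor points $x=i_0^{\mathcal{O}_0}(h)$ the previous step gives this immediately: eventually $\rho_{\mathcal{O}_k}(g)(x)=i_0^{\mathcal{O}_k}(gh)=i_0^{\mathcal{O}_0}(gh)=\rho_{\mathcal{O}_0}(g)(x)$. For $x$ in a complementary interval of $\mathbb{R}\setminus i_0^{\mathcal{O}_0}(\pi_1(\Sigma))$, I would sandwich $x$ between two nearby anchor points $a<x<b$ in $i_0^{\mathcal{O}_0}(\pi_1(\Sigma))$; by monotonicity of $\rho_{\mathcal{O}_k}(g)$ and the fact that $\rho_{\mathcal{O}_k}(g)(a),\rho_{\mathcal{O}_k}(g)(b)$ eventually equal $\rho_{\mathcal{O}_0}(g)(a),\rho_{\mathcal{O}_0}(g)(b)$, the value $\rho_{\mathcal{O}_k}(g)(x)$ is pinched into an arbitrarily small interval around $\rho_{\mathcal{O}_0}(g)(x)$. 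Once this pointwise convergence is in hand, Corollary~\ref{cor:continuity} concludes the argument.

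The main obstacle is controlling the situation where $\mathcal{O}_k$ places some late-enumerated element $s_{k_n}$ inside a complementary interval $(a,b)$ of $i_0^{\mathcal{O}_0}(\pi_1(\Sigma))$, producing new ``kinks'' in $\rho_{\mathcal{O}_k}(g)$ absent from $\rho_{\mathcal{O}_0}(g)$. If the sandwiching argument does not close cleanly because $i_0^{\mathcal{O}_0}(\pi_1(\Sigma))$ fails to be dense near $x$, the fallback is to invoke Corollary~\ref{cor:effect-of-i} and Remark~\ref{rem:blowup}: such deviations correspond precisely to inserting parallel-leaf strips which the straightening $S$ collapses. I would then replace $\rho_{\mathcal{O}_k}$ by a modified sequence $\tilde\rho_k$ obtained by collapsing the newly inserted intervals; this $\tilde\rho_k$ defines the same order as $\rho_{\mathcal{O}_k}$ (hence, by Corollary~\ref{cor:effect-of-i}, the same straightened lamination) while also converging pointwise to $\rho_{\mathcal{O}_0}$, so applying Corollary~\ref{cor:continuity} to $\tilde\rho_k$ yields the desired Hausdorff convergence.
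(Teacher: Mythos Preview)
The paper states this corollary with no proof, treating it as an immediate consequence of Corollary~\ref{cor:continuity} together with the canonical construction of Section~\ref{sec:singular-foliation}. Your approach---factor the map as $\mathcal{O}\mapsto\rho_{\mathcal{O}}\mapsto S(\mathcal{F}(\rho_{\mathcal{O}}))$ and verify continuity of the first arrow in the pointwise topology---is exactly the intended one, and your observation that each value $i_0^{\mathcal{O}}(s_n)$ is determined by $\mathcal{O}|_{\{s_0,\dots,s_n\}}$ (hence locally constant in the Sikora topology) is the key point. Your Cases~1 and~2 (anchor points and their accumulation points) are handled correctly.

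You are right that Case~3 is a genuine wrinkle the paper glosses over: on a gap $(a,b)$ of $\overline{i_0^{\mathcal{O}_0}(G)}$ the map $\rho_{\mathcal{O}_0}(g)$ is affine, while $\rho_{\mathcal{O}_k}(g)$ can acquire interior anchor points and fail to converge pointwise there. Your fallback idea is correct in spirit but the phrasing ``collapsing the newly inserted intervals'' is imprecise---collapsing destroys injectivity, so the result is no longer an allowed action. The clean version is: since (as noted just after Corollary~\ref{cor:essential-leaves}) $S(\rho)$ depends only on the order, you are free to replace $\rho_{\mathcal{O}_k}$ by \emph{any} allowed action $\tilde\rho_k$ realising $\mathcal{O}_k$; so choose the embedding $\tilde i_k$ to agree with $i_0^{\mathcal{O}_0}$ on $\{s_0,\dots,s_k\}$ and to squeeze all later elements into arbitrarily short subintervals of the existing gaps. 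Then $\tilde\rho_k\to\rho_{\mathcal{O}_0}$ pointwise by the sandwiching argument, Corollary~\ref{cor:continuity} applies, and Corollary~\ref{cor:effect-of-i} (strips collapse under straightening) gives $S(\tilde\rho_k)=S(\rho_{\mathcal{O}_k})$. With this adjustment your argument is complete and supplies detail the paper omits.
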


       \subsection{The Image of the Straightening Map}

       We end this section with an explicit construction of laminations on
       hyperbolic surfaces obtained by straighening fLOiations.
       
       Let $\alpha$ be the real part of a holomorphic $1$-form
       $\omega$ on a closed Riemann surface $\Sigma$ with one single
       zero, so that any non zero element of $H_1(\Sigma)$ paired with
       $\alpha$ is non-zero. We further assume that there is no closed
       singular leaf in the foliation induced by $\alpha$.

       Recall that $\omega$ induces a piecewise Euclidean structure on
       the surface, with the only singular point being the unique
       zero. Choose a geodesic triangulation $\mathcal{T}$ of $\Sigma$ for this
       piecewise Euclidean structure, so that the zero is the unique
       vertex (such a triangulation always exist, for example, one can
       always choose the Delaunay triangulation), and as always lift the
       triangulation to a triangulation $\tilde{T}$ on
       $\tilde{\Sigma}$ via the covering map.

      Let $\pi_1(\Sigma)=G_0>G_1>G_2\dots$ be the lower
      central series, then there is an embedding $i_0$ from
      $H_1(\Sigma)=G_0/G_1$ to $\mathbb{R}$ which send the order
      induced by $\alpha$ to the order on $\mathbb{R}$.

      Now we try to build an embedding of $\pi_1(\Sigma)$ into $\mathbb{R}$ so that the induced order on $\pi_1(\Sigma)$ is left invariant:
      \begin{prop}
       Let $S$ be a closed hyperbolic surface. Then, given any left order on $H_1(\Sigma)=G_0/G_1$, there exists a left order on $G_0=\pi_1(\Sigma)$ extending the partial ordering induced by the given ordering on $G_0/G_1$ (which we call a {\em lexicographic ordering} on $G_0$). Furthermore, suppose $i_0: H_1(\Sigma)\rightarrow \mathbb{R}$ is an embedding that sends the given left order on $H_1(\Sigma)$ to the order in $\mathbb{R}$, then there is a embedding $i: G_0\rightarrow\mathbb{R}$, such that $i_0$ can be obtained by collapsing intervals in the image of $i$. (We call $i$ a {\em blow-up} of $i_0$.)
      \end{prop}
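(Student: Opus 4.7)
The plan is to build the left order as a lexicographic order along the lower central series, using the fact that surface groups of genus $g \geq 2$ are residually torsion-free nilpotent. Specifically, I will use two classical group-theoretic facts (originally due to Magnus for free groups, extended by Labute and Baumslag): (i) $\bigcap_n G_n = \{1\}$, so that every non-identity $x \in G_0$ has a well-defined depth $n(x) = \max\{n : x \in G_n\}$; and (ii) each successive quotient $G_n/G_{n+1}$ is a finitely generated torsion-free abelian group, hence linearly orderable. I would cite these rather than reprove them.

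Granting (i) and (ii), I would first choose an arbitrary linear order on each $G_n/G_{n+1}$ for $n \geq 1$, retaining the given order on $G_0/G_1 = H_1(\Sigma)$ for $n = 0$. Then define the lexicographic order on $G_0$ by declaring $x > y$ if and only if the image of $xy^{-1}$ in $G_{n(xy^{-1})}/G_{n(xy^{-1})+1}$ is positive. The verification of left invariance is the short, clean step: from $[G_0, G_n] \subseteq G_{n+1}$ it follows that for any $c \in G_0$ and $x \in G_n$, we have $cxc^{-1} \equiv x \pmod{G_{n+1}}$, so conjugation acts trivially on $G_n/G_{n+1}$ and the chosen order on each quotient is automatically $G_0$-invariant. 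By construction this order extends the partial ordering induced by $H_1(\Sigma)$.

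For the blow-up embedding, I would build $i:G_0 \to \mathbb{R}$ inductively along the filtration. Suppose we have produced an order-preserving embedding $i_n: G_0/G_n \to \mathbb{R}$ refining $i_0$ by interval collapses. To refine to $G_0/G_{n+1}$, enumerate the elements $\bar g_0, \bar g_1, \ldots$ of $G_0/G_n$ and choose, inductively, pairwise disjoint open intervals $I_{\bar g_k}$ centered at $i_n(\bar g_k)$ whose diameters are summable and tend to $0$. Because the fiber over $\bar g_k$ is a coset of the finitely generated torsion-free abelian group $G_n/G_{n+1}$, it admits an order embedding into $I_{\bar g_k}$; gluing these fiberwise embeddings yields $i_{n+1}$. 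Since $\bigcap_n G_n = \{1\}$, every element has finite depth, so the sequence $i_n$ stabilizes on each element of $G_0$ and gives a well-defined order embedding $i: G_0 \to \mathbb{R}$.

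The main obstacle will be verifying that collapsing intervals in $i(G_0)$ genuinely recovers $i_0$, not merely some coarsening of it. This requires careful bookkeeping: the shrinking-diameter condition ensures that for each $\bar g \in H_1(\Sigma)$ the entire fiber $gG_1$ lands in $\overline{I_{\bar g}}$ (as a nested intersection across $n \geq 1$), and the disjointness of the $I_{\bar g}$ at the first level guarantees that the collapse map sending each $\overline{I_{\bar g}} \cap i(G_0)$ to the single point $i_0(\bar g)$ is well-defined and produces exactly $i_0$ after composing with the projection $G_0 \twoheadrightarrow H_1(\Sigma)$. The residual torsion-free nilpotence of $\pi_1(\Sigma)$ is the only non-elementary ingredient; everything else is an inductive construction compatible with the earlier framework (in particular with Remark~\ref{rem:blowup}).
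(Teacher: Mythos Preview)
Your proposal is correct and follows essentially the same route as the paper: both build the order lexicographically along the lower central series and construct $i$ as a limit of successive interval blow-ups with summable total lengths. You are in fact more careful than the paper in explicitly invoking the residual torsion-free nilpotence of surface groups and in checking left-invariance via $[G_0,G_n]\subseteq G_{n+1}$; the only quibble is that your sequence $i_n$ \emph{converges} on each element rather than literally stabilizes, but the Cauchy argument from the shrinking diameters makes this clear.
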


      \begin{proof}
        Firstly, for each $k$, fix a left order of $G_k/G_{k+1}$. Then, start with the embedding $i_0$ above. Replace each image of $i_0$ with an interval, such that the total length of such intervals is no more than $2^{-1}$, and send $G_1/G_2$ to the interval in a way that is order preserving, which defines a map $i_1: G/G_2\rightarrow \mathbb{R}$. Repeat the process to get $i_2$, $i_3$, $\dots$, where the new intervals added at the $j$-th step has total length no more than $2^{-j}$. Then $i$ is the limit of $i_j$ as $j\rightarrow\infty$.
	\end{proof}
	We now have maps $i_0, i:G  \to \mathbb{R}$. The map $i_0$ is not an embedding, but it satisfies the
	conditions discussed in Section~\ref{subsec:fLOiation_partial}, and so defines a fLOiation.
      \begin{prop}
	The foliation defined by $i_0$ is identical to the foliation on $\Sigma$ induced by $\alpha$.
\end{prop}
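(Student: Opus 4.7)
The plan is to write down an explicit candidate for the defining function and verify it satisfies the construction. Set $F_\alpha:\tilde{\Sigma} \to \mathbb{R}$ by $F_\alpha(x) = \int_{v_0}^{x} \tilde{\alpha}$, where $\tilde{\alpha}$ is the pullback of $\alpha$ to the universal cover. Since $\tilde{\Sigma}$ is simply connected and $\tilde{\alpha}$ is closed, $F_\alpha$ is well defined; away from lifts of the singular vertex, its level sets are precisely the leaves of the $\alpha$--foliation on $\Sigma$. Integrating along loops yields $F_\alpha(gx) - F_\alpha(x) = \int_g \alpha$, a quantity depending only on $[g] \in H_1(\Sigma)$, so $F_\alpha$ is equivariant with respect to the translation action of $\pi_1(\Sigma)$ on $\mathbb{R}$ sending $g$ to translation by $\int_g \alpha$.

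Next I would match $F_\alpha$ with the function $F$ produced by the construction of Section~\ref{subsec:fLOiation_partial} starting from $i_0$. Normalise so that $i_0([g]) = \int_g \alpha$; by the nondegeneracy hypothesis on $\alpha$ this is an honest order-preserving embedding of $H_1(\Sigma)$ into $\mathbb{R}$, and the associated action $\rho_{i_0}(g)$ is exactly translation by $\int_g \alpha$. Thus $F_\alpha$ satisfies the same equivariance relation $F_\alpha(gx) = \rho_{i_0}(g) F_\alpha(x)$ used to define $F$, and in particular $F_\alpha(g v_0) = i_0([g]) = F(g v_0)$ on the $0$-skeleton. On each edge of $\tilde{\mathcal{T}}$ both functions are affine with identical boundary values: for $F$ this is the construction, while for $F_\alpha$ it follows because every edge is a Euclidean segment in a flat chart where $\alpha = dx$. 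Hence $F = F_\alpha$ on the $1$-skeleton.

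On each $2$-simplex $\Delta$ the triangle is Euclidean and $\alpha$ is constant, so $F_\alpha|_\Delta$ is affine and its level sets are parallel straight segments joining the boundary points prescribed by $F_\alpha|_{\partial \Delta}$. The fLOiation construction extends $F$ to $\Delta$ by demanding that its level sets be straight $1$-polytopes whose endpoints on $\partial \Delta$ are the preimages of the corresponding $F$-values on the edges. Since a segment in a triangle is determined by its two boundary endpoints, and for each $c$ the boundary trace $\{F = c\} \cap \partial \Delta$ agrees with $\{F_\alpha = c\} \cap \partial \Delta$, the two families of level segments in $\Delta$ coincide. Applying this to every simplex identifies the leaves of $\mathcal{F}(\mathcal{T}, i_0)$ on $\tilde{\Sigma}$ with those of the lifted $\alpha$--foliation; at the common singular locus $G v_0$ both pictures exhibit the same cone-point saddle dictated by $\omega$. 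Projecting to $\Sigma$ yields the proposition.

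The most delicate point is the observation that the ``linear $1$-polytope'' prescription used when extending $F$ across each $2$-simplex is completely pinned down by the boundary trace of $F$; this is what permits the comparison with the affine function $F_\alpha$ on each triangle. Everything else is essentially bookkeeping, using that the flat structure from $\omega$ makes $\alpha$ piecewise linear on the chosen geodesic triangulation and that the no-closed-singular-leaf hypothesis guarantees that no edge lies in the equivalence class of the identity, so that the partial-order construction of Section~\ref{subsec:fLOiation_partial} indeed applies.
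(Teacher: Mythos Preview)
Your proof is correct and follows essentially the same strategy as the paper: both arguments identify the defining function $F$ of the fLOiation with the primitive $F_\alpha(x)=\int_{v_0}^x\tilde{\alpha}$ on the universal cover, using that $\alpha$ is a constant $1$--form on each Euclidean triangle of $\tilde{\mathcal{T}}$ so that $F_\alpha$ is affine on every simplex. The paper's version is much terser---it simply asserts that the integral of $\alpha$ is linear on each triangle and therefore coincides with $F$---whereas you spell out the equivariance, the matching on the skeleta, and the reason the no-closed-singular-leaf hypothesis is what guarantees that no edge lies in the equivalence class of the identity; this last observation is a genuine clarification that the paper leaves implicit.
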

\begin{proof}
  By construction, $\omega$ as well as $\alpha$ are constant forms on each
  triangle on $\tilde{T}$. Hence, the integral of $\alpha$ is linear
  on each triangle, in particular they would be linear on edges
  starting at a given lifting of the vertex $v_0$. This implies that
  the integral of $\alpha$ is exactly the function $F$ in
  Section~\ref{sec:singular-foliation}, hence its level set is this
  new foliation.
\end{proof}

	Now, by Remark~\ref{rem:blowup}, the foliation defined by $i$ differs from the one defined by $i_0$ by blowing up certain leaves into strips.
    Due to the assumption on $\alpha$, each connected component of the preimage of any inserted interval during the blowup can contain only one vertex in $\tilde{\Sigma}$. Hence, the foliation on this connected component can only be isotopic to the foliation on a tubular neighborhood of the singular leaf of $Re(z^kdz)$ (which we shall call a ``star-shaped ribbon graph''). 

    Hence, the foliation formed from $i$ can be obtained by ``blowing up'' the foliation corresponding to $\alpha$, by replacing the singular leave with a star-shaped ribbon graph. This ribbon graph will become a $n$-gon after straightening just like the singular leaf of the foliation corresponding to $\alpha$.
    
    In particular, the straightening of $\mathcal{F}_i$ agrees with the straightening of the vertical foliation of $\alpha$.
     
    In conclusion, we have the following:
    \begin{prop}
      Suppose $\alpha$ is an Abelian differential with a single zero on $\Sigma$, so that
      the foliation defined by the real part of $\alpha$ has no closed leaves. Then the
      straightening of this foliation lies in the image of $S$.
    \end{prop}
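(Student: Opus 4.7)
The plan is to collect the pieces constructed in the preceding discussion into a single argument. The previous proposition gives us a left order on $G_0 = \pi_1(\Sigma)$ extending the partial order induced by $\alpha$ on $H_1(\Sigma) = G_0/G_1$, together with an embedding $i : G_0 \to \mathbb{R}$ realizing this order, such that $i_0$ can be recovered from $i$ by collapsing a disjoint union $J$ of intervals. First, I would invoke the construction of Section~\ref{sec:singular-foliation} applied to $i$ to obtain a genuine fLOiation $\mathcal{F}_i$ on $\Sigma$, whose straightening by Corollary~\ref{cor:essential-leaves} defines a point $S(\mathcal{F}_i)$ in $\mathcal{GL}(\Sigma)$, and this point lies by definition in the image of the straightening map $S$.

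The remaining task is to identify $S(\mathcal{F}_i)$ with the straightening of the vertical foliation $\mathcal{F}_\alpha$ of $\alpha$. For this I would apply Remark~\ref{rem:blowup}, which says that $\mathcal{F}_i$ is obtained from the partial-order fLOiation $\mathcal{F}_{i_0}$ by blowing up the preimages of the connected components of $J$ into strips of parallel leaves. By the previous proposition in the subsection, $\mathcal{F}_{i_0}$ equals $\mathcal{F}_\alpha$ on $\Sigma$, since the integral of $\alpha$ agrees with the defining function $F$ on the triangulation. Thus $\mathcal{F}_i$ is a blow-up of $\mathcal{F}_\alpha$ along some collection of leaves.

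The core remaining step is to check that this blow-up does not alter the lamination obtained after straightening. Here one uses the hypotheses on $\alpha$: because any nonzero class in $H_1(\Sigma)$ has nonzero pairing with $\alpha$, and the $\alpha$-foliation has no closed singular leaves, each connected component of the preimage (in $\tilde\Sigma$) of an inserted interval contains at most a single vertex of $\tilde{\mathcal{T}}$. The blown-up region over such a component is therefore a star-shaped ribbon graph neighborhood of a singular leaf, exactly modelling the local picture of $\mathrm{Re}(z^k dz)$. Straightening collapses such a region to the same $n$-gon that the singular leaf of $\mathcal{F}_\alpha$ straightens to. I expect this to be the main obstacle: one must verify carefully that no two distinct singularities or saddle connections are identified under this collapse, and here the absence of closed singular leaves is exactly what prevents such unwanted coincidences.

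Combining these steps, $S(\mathcal{F}_i)$ coincides with the geodesic lamination obtained by straightening the vertical foliation of $\alpha$, and since $\mathcal{F}_i$ arises from an honest left order on $\pi_1(\Sigma)$, this lamination lies in the image of $S$.
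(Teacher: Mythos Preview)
Your proposal is correct and follows essentially the same route as the paper: the proposition there is stated as the conclusion of the preceding discussion, and your argument faithfully assembles exactly those pieces (construct $i$ as a blow-up of $i_0$ via the lower central series, identify $\mathcal{F}_{i_0}$ with $\mathcal{F}_\alpha$, use Remark~\ref{rem:blowup} to relate $\mathcal{F}_i$ to $\mathcal{F}_{i_0}$, and invoke the hypotheses on $\alpha$ to see that each inserted strip contains a single vertex and hence straightens to the same $n$-gon). There is no substantive difference in strategy or in the key steps.
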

    
Above proposition naturally lead us to the following question. 
\begin{ques}
	Characterise (in terms of properties of the order $O$ on $\pi_1(\Sigma_g)$) when the fLOiation $F_O$ has a closed leaf.
\end{ques}       
    
    We also remark the following, which follows since the exact choice of how to
    extend the partial order $i_0$ to $i$ is immaterial:
    \begin{cor}
      The map $S$ is not injective.
    \end{cor}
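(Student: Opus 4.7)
The plan is to exhibit two distinct left orders on $\pi_1(\Sigma)$ whose straightened fLOiations coincide, by exploiting the choices made in the lexicographic extension used in the preceding proposition. The hint given in the statement (``since the exact choice of how to extend the partial order $i_0$ to $i$ is immaterial'') points directly to this strategy.

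Fix an Abelian differential $\alpha$ as in the preceding proposition, together with the induced order on $H_1(\Sigma) = G_0/G_1$ and the embedding $i_0$. Recall that extending $i_0$ to an embedding $i: \pi_1(\Sigma) \to \mathbb{R}$ involves a free choice of left order on each successive quotient $G_k/G_{k+1}$ of the lower central series. For $g \geq 2$, $\pi_1(\Sigma)$ is residually torsion-free nilpotent, and each such quotient is a nontrivial finitely generated free abelian group admitting infinitely many distinct left orders. First, I would make two different choices of left order on, say, $G_1/G_2$ (keeping all other choices identical) to produce two embeddings $i, i': \pi_1(\Sigma) \to \mathbb{R}$ and the associated left orders $\mathcal{O}, \mathcal{O}'$ on $\pi_1(\Sigma)$.

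Next, I would verify the two key points. For distinctness $\mathcal{O} \neq \mathcal{O}'$: the lexicographic construction guarantees that the quotient order induced by $i$ on $G_1/G_2$ is exactly the order chosen at that stage, so by construction the two orders disagree on a pair of elements whose ratio lies in $G_1 \setminus G_2$. For equality after straightening: the preceding proposition shows that both $S(\mathcal{F}_i)$ and $S(\mathcal{F}_{i'})$ coincide with the straightening of the vertical foliation of $\alpha$, which depends only on $i_0$ (equivalently, on $\alpha$) and not on how the partial order is extended up the lower central series. Hence $S(\mathcal{O}) = S(\mathcal{O}')$ while $\mathcal{O} \neq \mathcal{O}'$, establishing non-injectivity.

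The only (mild) obstacle is verifying that $G_1/G_2$ is a nontrivial torsion-free abelian group admitting more than one left order, which is standard: for $g \geq 2$, the surface group is residually torsion-free nilpotent with free abelian lower-central quotients of positive rank, and any free abelian group of rank $\geq 1$ admits at least two left orders (indeed a continuum of them once the rank is $\geq 2$).
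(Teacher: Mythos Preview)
Your proposal is correct and follows exactly the approach the paper takes: different choices of how to extend $i_0$ to $i$ along the lower central series yield distinct orders on $\pi_1(\Sigma)$, yet by the preceding proposition all of these straighten to the same geodesic lamination (the straightening of the vertical foliation of $\alpha$). In fact you have supplied more detail than the paper, which records this only as a one-line remark.
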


\section{Dimension 3}
\label{sec:dim3}

In this section, we focus in the case of dimension 3. First, we note that a construction similar to the one given in Section \ref{sec:surfaces} for surfaces can be done to the mapping torus $M$ of surface maps. This can be done via the central series: $G=\pi_1(M)>\pi_1(\Sigma)>\dots$, where $\Sigma$ is the surface, and $\pi_1(\Sigma)>\dots$ is the lower central series of $\pi_1(\Sigma)$. The one-vertex triangulation is formed from a one vertex triangulation of $S$, and the restriction of the resulting foliation on $\Sigma$ is as constructed above.

\medskip Next we ask when the fLOiation $\mathcal{F}$ constructed in Section~\ref{sec:singular-foliation} is nonsingular. 

In \cite{calegari2000foliations}, Calegari showed that orientations on the edges of a triangulation which satisfy certain conditions gives information about the 3-manifold. Let $M, \mathcal{T}, \mathcal{F}$ be as in Section~\ref{sec:singular-foliation}. We have orientations on the edges of $\mathcal{T}$ given by a left order on $\pi_1(M)$. In the section, we study when this choice of orientations satisfies the conditions that Calegari considered. 

We begin by recalling some terms introduced in \cite{calegari2000foliations}. 
Let $M$ be a closed 3-manifold, and let $\mathcal{T}$ be a triangulation of $M$. For each vertex $v$, we define the maximal subgraphs $o(v)$ and $i(v)$ of $link(v)$ 
whose vertices are, respectively, the outgoing and the incoming vertices from and to $v$. 
A \emph{direction} on $M$ is a choice of orientation for each edge in the 1-skeleton $\mathcal{T}$. A direction is a \emph{local orientation} if it satisfies the conditions
\begin{itemize}
\item[1.] for each vertex $v$ the graphs $o(v)$ and $i(v)$ are nonempty and connected
\item[2.] the direction restricts to a total ordering on the vertices of each tetrahedron
\item[3.] the 1-skeleton is recurrent as a directed graph. That is, there is an increasing path from each vertex to each other vertex.
\end{itemize} 

Suppose $M, \mathcal{T}$ are as in the previous section. The direction is given so that each edge represents a positive element of $\pi_1(M)$ with respect to the linear order to begin with. Since there is a total order on the entire group $\pi_1(M)$ which restricts to the direction on the edges of $\mathcal{T}$, the condition 2 in the definition of a local orientation is automatically satisfied. Condition 3 is also vacuously true, since there is only one vertex. 

To have a local orientation, now it suffices to check that condition 1 is also satisfied. What we show is the following. 

\begin{prop}
\label{prop:regular-criterion}
 The singular foliation $\mathcal{F}$ is a regular foliation if and only if the choice of orientations on the edges of $\mathcal{T}$ given by the left order is a local orientation. 
\end{prop}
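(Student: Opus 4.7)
The plan is to analyze $\mathcal{F}$ locally at its only candidate singular point, the unique vertex $v$; by deck equivariance it suffices to study a single lift $v_0 \in \tilde{M}$. Since conditions (2) and (3) of a local orientation are vacuous in this setting (as noted in the text preceding the proposition), the content of the statement is that $\mathcal{F}$ is regular at $v_0$ if and only if both $o(v)$ and $i(v)$ are nonempty and connected. The key local observation is that $F^{-1}(0)$ near $v_0$ is the cone with apex $v_0$ over its intersection $L := F^{-1}(0) \cap \mathrm{link}(v_0)$: on each tetrahedron incident to $v_0$, the function $F$ is linear, vanishes at $v_0$, and is nonzero on the opposite face, so the zero set within that tetrahedron is precisely the linear join of $v_0$ with $F^{-1}(0)$ on that face.

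Next I would analyze $L$ combinatorially. The link $\mathrm{link}(v_0)$ is a triangulated $S^2$, and each of its vertices corresponds to an edge at $v_0$ labeled $+$ or $-$ according to whether the associated group element $g$ satisfies $g>e$ or $g<e$ in the order. On each triangle of the link, $L$ is either empty (monochromatic labels) or a single arc cutting off the odd vertex (mixed labels); matching across edges, $L$ is a disjoint union of simple closed curves on $S^2$ and divides $S^2 \setminus L$ into ``positive'' and ``negative'' open regions. Regularity of $\mathcal{F}$ at $v_0$ is equivalent to the cone over $L$ being a topological disk, which holds iff $L$ consists of exactly one circle: if $L = \emptyset$ then $v_0$ is a strict local extremum of $F$ and the level set locally reduces to $\{v_0\}$, and if $L$ has at least two components then the cone is pinched at $v_0$ (several disks meeting only at the apex).

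The final step is to translate ``$L$ is a single circle'' into connectivity of $o(v)$ and $i(v)$. By the iterated Jordan curve theorem on $S^2$, $L$ has $k$ components iff $S^2 \setminus L$ has $k+1$ components, and each such component is contained entirely in the positive or the negative part; hence $L$ is a single circle iff both open regions are nonempty and connected. These in turn are equivalent to nonemptiness and connectedness of the subgraphs $o(v)$ and $i(v)$: the positive open region deformation retracts onto $o(v)$, as can be verified triangle by triangle (a $3{+}$ triangle retracts onto any edge of $o(v)$, the quadrilateral positive piece of a $2{+}1{-}$ triangle retracts onto the edge of $o(v)$ joining its two $+$ vertices, and the small positive corner of a $1{+}2{-}$ triangle retracts onto its unique $+$ vertex), and symmetrically for $i(v)$. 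This last equivalence between the combinatorial graphs $o(v), i(v)$ and the open regions they bound in $S^2$ is the one piece of bookkeeping that requires genuine care; once it is in place, the chain of biconditionals immediately yields the proposition.
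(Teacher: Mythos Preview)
Your argument is correct and follows essentially the same route as the paper: both identify the leaf through the vertex with the cone over a family $L$ of simple closed curves in the link sphere (the paper calls this the ``red locus''), then show that regularity is equivalent to $L$ being a single circle, which in turn is equivalent to connectedness of $o(v)$ and $i(v)$; your deformation-retract verification of the last equivalence is a bit more explicit than the paper's. One small inaccuracy worth fixing: $F$ is not literally linear on each tetrahedron (only its level sets are linear polytopes by construction), and in the mixed-sign case $F$ \emph{does} vanish along an arc of the opposite face---but this does not affect the cone description, which is all your argument actually uses.
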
 
\begin{proof}

To show this, we consider the link of the vertex $v$. Since $M$ is a manifold and $\mathcal{T}$ is a triangulation, the $link(v)$ is actually a triangulated sphere. Let $\tau$ denote the triangulation of $link(v)$. We color the edges of $\tau$ in the following way: 
\begin{itemize}
\item one colors an edge $e$ in blue if the vertices connected by $e$ represents the local edges have the same orientation. 
\item otherwise, one colors $e$ in black.
\end{itemize}  
It is straightforward to see that this coloring is well-defined. Each tetrahedron in $\mathcal{T}$ gives four triangles of $\tau$, and obviously two of them have all edges colored in blue, and each of other two has one blue edge and two black edges. Let $n$ be the number of tetrahedra in $\mathcal{T}$. Then, there are $4n$ blue edges and $2n$ black edges in $\tau$. 

One each tetrahedron  $\Upsilon$ of $\mathcal{T}$, among connected components of $\mathcal{F} \cap \Upsilon$, there are two distinguished triangles which meet the vertex at the corner which are not local sink nor local source. Some faces of $\tau$ meets these distinguished triangles, and we record the intersection as a mid-curve colored in red. On the 2-skeleton of $\tau$, these form a union of closed red curves so that each face interests at most one red curve. 

Note that from the construction it is clear that both $o(v)$ and $i(v)$ are nonempty. 
First we show that the red locus has only one connected component if and only if $o(v)$ and $i(v)$ are connected. Observe that the complement of the interior of the carrier of the red curves (union of the triangles intersecting a red curve) has only blue edges. There are more than one connected components of the red locus if and only this complement has at least three connected components. Each connected component is entirely contained in either $o(v)$ or $i(v)$. Hence this means either $o(v)$ or $i(v)$ is not connected. 
s
To finish the proof, we need to show that the red locus has only one connected component if and only if the foliation $\mathcal{F}$ is non-singular. Note that each leaf has a natural cell-decomposition where each cell is a connected component of the intersection between the leaf and a tetrahedron. Hence each cell is either a triangle or a quadrilateral. Let $L$ be the leaf containing the vertex $v$. Note that all the cells of $L$ containing $v$ as a vertex are triangles which has a piece of red curve.  These are distinguished triangles in tetrahedra described above. Let $U$ be the union of all these triangles incident to $v$. Then components of $U \setminus v$ precisely corresponds to the components of the red locus. Now simply note that $\mathcal{F}$ is non-singular if and only if $U \setminus v$ has only one component. Now the proposition is proved. 

\end{proof}

Using the above proposition and the result of \cite{calegari2000foliations}, we can conclude that when $\mathcal{F}$ is a regular foliation (equivalently, the order-induced edge orientation on $\mathcal{T}$ is a local orientation), then the universal cover of $M$ admits a measured foliation. 

\begin{lem} The direction on the 1-skeleton of $\mathcal{T}$ given by the linear order has the property that every directed loop is homotopically essential. 
\end{lem}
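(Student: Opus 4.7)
The plan is to reduce the claim to the basic fact that in a left-ordered group the set of positive elements forms a semigroup, hence cannot contain the identity.

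First I would unpack what a directed loop in the 1-skeleton represents. Since $\mathcal{T}$ has a unique vertex $v$, every edge of $\mathcal{T}$ is a loop at $v$ and therefore determines an element of $\pi_1(M,v)$. The direction is chosen precisely so that traversing an edge $e$ in its positive direction represents a positive element $g_e > e$ in the order $\mathcal{O}$. Thus a directed loop $e_1 e_2 \cdots e_k$ (each edge traversed in its positive direction) represents the product $g_{e_1} g_{e_2} \cdots g_{e_k}$ in $\pi_1(M,v)$, and showing the loop is homotopically essential amounts to showing this product is not the identity.

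Next I would invoke the left-invariance of $\mathcal{O}$: if $a, b > e$, then multiplying $b > e$ on the left by $a$ gives $ab > a > e$, so the positive cone is closed under multiplication. By a straightforward induction, $g_{e_1} g_{e_2} \cdots g_{e_k} > e$, and in particular this product is nontrivial. Hence the loop is homotopically essential.

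There is no real obstacle here; the argument is essentially just the statement that the positive cone of a left order is a subsemigroup avoiding the identity. The only point worth flagging is that we do use the (weak) essentiality of the triangulation implicitly in the very first step, namely that each edge represents a well-defined nontrivial element of $\pi_1(M)$, which is needed for the edges to have well-defined signs in the order at all.
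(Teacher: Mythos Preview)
Your argument is correct and is exactly the approach taken in the paper: the paper's proof is the one-line observation that the positive cone of a left order is a semigroup, so a product of positive edge-elements cannot be the identity. Your write-up simply spells this out in more detail.
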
 
\begin{proof} This is a direct consequence of the fact that the positive cone for a linear order is a semi-group, i.e., a product of positive elements is again a positive element. 
\end{proof}

\begin{thm} If the singular foliation $\mathcal{F}$ is a regular foliation, then $\widetilde{M}$ admits a measured foliation which is normal with respect to the lift of $\mathcal{T}$. 
\end{thm}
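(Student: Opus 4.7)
The plan is to upgrade the function $F:\tilde{M}\to\mathbb{R}$ built in Section~\ref{sec:singular-foliation} into a transverse invariant measure for $\tilde{\mathcal{F}}$. By Proposition~\ref{prop:regular-criterion}, the regularity of $\mathcal{F}$ is equivalent to the order-induced direction on the edges of $\mathcal{T}$ being a local orientation, and by the preceding lemma every directed loop is homotopically essential. These are precisely the combinatorial hypotheses of Calegari's theorem in \cite{calegari2000foliations}, so one argument is to simply cite his result to obtain a transverse measured foliation of $\tilde{M}$ and identify it with the lift of $\mathcal{F}$.

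I would prefer, however, the direct argument, which also makes the ``normal with respect to $\tilde{\mathcal{T}}$'' statement manifest. Since $\mathcal{F}$ is assumed regular, its lift $\tilde{\mathcal{F}}$ is a genuine codimension-one foliation whose leaves are the level sets of $F$. Define a transverse measure $\mu$ on any arc $\alpha$ transverse to $\tilde{\mathcal{F}}$ by
\[ \mu(\alpha) \;=\; \int_\alpha |dF|, \]
equivalently the pushforward of Lebesgue measure on $\mathbb{R}$ by $F\vert_\alpha$. Finiteness on compact transversals and countable additivity follow from continuity of $F$. Holonomy invariance is automatic: the holonomy along leaves sends each point $x$ to a point $y$ on the same leaf, so $F(x)=F(y)$, and hence arcs related by holonomy have the same $F$-variation. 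Normality to $\tilde{\mathcal{T}}$ is built into the third bullet of the construction in Section~\ref{subsec:fLOiationgeneral}, which forced the intersection of each level set with a top-dimensional simplex to be a linear $(d{-}1)$-polytope.

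The main step to check carefully is that $F$ really is a local PL submersion at every point of $\tilde{M}$, so that $\tilde{\mathcal{F}}$ is a foliation and $\mu$ is a transverse measure rather than a measure with singularities. Away from vertex lifts this is immediate, since along every edge $F$ is strictly monotone (edges represent positive group elements, so their endpoints have distinct $F$-values) and $F$ is linearly extended to higher-dimensional cells with level sets being codimension-one polytopes. At a lift $gv_0$ of the vertex, the regularity hypothesis combined with the link analysis in the proof of Proposition~\ref{prop:regular-criterion} shows that the red locus on $\mathrm{link}(v)$ has a single component, equivalently the level set $F^{-1}(\rho(g)(0))$ separates a small neighborhood of $gv_0$ into exactly two components corresponding to $F$ strictly larger or strictly smaller. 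This is precisely the defining local picture of a nonsingular codimension-one foliation.

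With these verifications, $(\tilde{\mathcal{F}},\mu)$ is the desired measured foliation on $\tilde{M}$, normal to the lift $\tilde{\mathcal{T}}$ of the triangulation. The only nontrivial ingredient is the link-theoretic verification of the submersion property at vertex lifts, and this is exactly what is furnished by Proposition~\ref{prop:regular-criterion}; everything else is bookkeeping about the PL function $F$ that has already been constructed.
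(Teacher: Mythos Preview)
Your first paragraph is exactly the paper's proof: they simply cite Theorem~5.1 of \cite{calegari2000foliations}, after Proposition~\ref{prop:regular-criterion} and the lemma on essential directed loops have verified its hypotheses. The paper does not attempt to identify Calegari's foliation with $\tilde{\mathcal{F}}$, so your parenthetical ``and identify it with the lift of $\mathcal{F}$'' is a small extra claim you would have to justify separately (or drop).

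Your second approach, building the transverse measure directly from $F$, is a genuinely different and more elementary route than the paper's. Since the leaves of $\tilde{\mathcal{F}}$ are by construction the level sets of $F$, holonomy preserves $F$-values and your measure $\mu(\alpha)=|F(\alpha(1))-F(\alpha(0))|$ is automatically holonomy-invariant; this bypasses Calegari's machinery entirely and makes the normality to $\tilde{\mathcal{T}}$ transparent. One caveat: calling $F$ a ``local PL submersion'' is not quite right, because on an edge $e$ ending at $gv_0$ with $g\neq 1$ the function is $\rho(g)$ composed with a linear map, and $\rho(g)$ is only a homeomorphism of $\mathbb{R}$. The \emph{level sets} are PL (linear polytopes in each cell), so the foliation is PL-normal as claimed, but $F$ itself is merely a topological submersion. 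This does not damage your argument---the transverse measure only needs continuity and injectivity of $F$ on transversals---but you should phrase the verification accordingly. The trade-off: the paper's citation is shorter, while your direct argument gives more, namely that $\tilde{\mathcal{F}}$ itself carries the transverse measure, not just some measured foliation on $\tilde{M}$.
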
 
\begin{proof} This now directly follows from Theorem 5.1 of \cite{calegari2000foliations}. 
\end{proof} 

From our construction, a left order on $\pi_1(M)$ gives both an edge orientation and a singular foliation. Sometime an edge orientation is good enough to get a foliation. For example, Dunfield introduced a notion of foliar orientation in \cite{dunfield2020floer} and it is a sufficient condition for the manifold to admit a taut foliation. 
In the case a given order on $\pi_1(M)$ is bi-invariant, ie., both left and right invariant, the induced edge orientation necessarily admits a sink edge so cannot be foliar. Hence it is not a priori clear if the manifold admits a taut foliation or not. On the other hand, we still have a fLOiation in this case, and there is no obvious obstruction for the edge orientation to be a local orientation. Once it is a local orientation, by Proposition \ref{prop:regular-criterion}, the fLOiation is a regular foliation and actually it is taut since the collection of edges is a collection of simple closed curves which meet every leaf of the foliation transversely. Hence, this could provide a larger class of examples of edge orientations than foliar orientations which guarantee the existence of a taut foliation in the manifold (hence related to the L-space conjecture). In short, we propose the following question. 

\begin{ques}\label{ques:biorder} 
When is the edge orientation induced by a bi-invariant order a local orientation? 
\end{ques} 

Above question might be easier to answer when we have more information about bi-invariant orders of $\pi_1(M)$. 

\begin{probl} When the manifold is either seifert-fibered or Sol, then there is a known necessary and sufficient condition for $\pi_1$ to be bi-orderable by Boyer-Rolfsen-Wiest. 
Answer Question \ref{ques:biorder} in this case. 
\end{probl}

\bibliographystyle{abbrv}
\bibliography{biblio}

\end{document}